\pgfplotsset{compat=1.14}
\DeclareMathAlphabet{\curly}{OT1}{rsfs}{n}{it}
\DeclareMathOperator{\rank}{rank}
\DeclareMathOperator{\im}{Im}
\DeclareMathOperator{\Fix}{Fix}
\DeclareMathOperator{\tr}{tr}
\DeclareMathOperator{\inc}{in}
\DeclareMathOperator{\pr}{pr}
\DeclareMathOperator{\coker}{Coker}
\DeclareMathOperator{\Sm}{Sm}
\DeclareMathOperator{\Bl}{Bl}
\DeclareMathOperator{\Tors}{Tors}
\DeclareMathOperator{\interior}{Int}
\DeclareMathOperator{\Conj}{c}
\def\dim{\mbox{dim}}
\def\ra{\rightarrow}
\def\cal{\mathcal}
\def\CC{\mathbb{C}}
\def\PP{\mathbb{P}}
\def\QQ{\mathbb{Q}}
\def\ZZ{\mathbb{Z}}
\def\RR{\mathbb{R}}
\def\FF{\mathbb{F}}
\def\s-{\setminus}
\def\HH{\mathbb{H}}
\def\ra{\rightarrow}
\newtheorem{thm}{Theorem}[section]
\newtheorem{prop}[thm]{Proposition}
\newtheorem{defn}[thm]{Definition}
\newtheorem{cor}[thm]{Corollary}
\newtheorem{lem}[thm]{Lemma}
\newtheorem{rmk}[thm]{Remark}
\numberwithin{equation}{section}
\begin{document}

\definecolor{ffqqqq}{rgb}{1,0,0}

\title[Maximality]
{On the maximality problem for the Hilbert square of real surfaces}

\author[Kharlamov]{Viatcheslav Kharlamov}

\address{
        IRMA UMR 7501, Strasbourg University, 7 rue Ren\'e-Descartes, 67084 Strasbourg Cedex,  FRANCE}

\email{kharlam@math.unistra.fr}

\author[R\u asdeaconu]{Rare\c s R\u asdeaconu}

\address{        
        Department of Mathematics,1326 Stevenson Center, Vanderbilt University, Nashville, TN, 37240, USA}
  \address{
	Institute of Mathematics of the Romanian Academy,  P.O. Box 1-764, Bucharest 014700,  Romania}

\email{rares.rasdeaconu@vanderbilt.edu}

\keywords{real algebraic surfaces, Smith exact sequence, Smith-Thom maximality,  punctual Hilbert schemes}

\subjclass[2020]{Primary: 14P25; Secondary: 14C05, 14J99}

\begin{abstract}
 We explore maximality with respect to the classical Smith bound on the total Betti 
 number of the real locus. For a large class of surfaces, we prove that the Hilbert 
 square of a real surface is maximal if and only if the surface is maximal and has 
 connected real locus. In particular, the Hilbert square of no K3 or abelian surface 
 is maximal. We also exhibit various types of maximal surfaces, including ones with 
 disconnected real locus, whose Hilbert square is maximal.
\end{abstract}

\dedicatory{Dedicated to the memory of Mark Sapir}

\maketitle

\thispagestyle{empty}

\vskip-4mm
\setlength\epigraphwidth{.53\textwidth}
\epigraph{On d\'edaigne volontiers un but qu'on n'a pas r\'eussi \`a atteindre, 
ou qu'on a atteint d\'efinitivement.}{ M.~Proust, A la recherche du temps perdu. \\}

\section{Introduction}

 A result of fundamental importance in understanding the topology of real algebraic varieties is the 
Smith inequality  \cite{Smith} that bounds from above the sum of the $\FF_2$-Betti numbers
of the fixed point set of an involution by the sum of $\FF_2$-Betti numbers of the ambient space 
itself. Applied to the complex conjugation on an  $n$-dimensional algebraic variety $X$ 
defined over the field $\RR$ it states that
\begin{equation}
\label{smith-intro}
\sum_{i=0}^n \beta_i(X(\RR))\leq\sum_{i=0}^{2n} \beta_i(X(\CC)).
\end{equation}

A real algebraic variety realizing equality in the Smith inequality is said to be {\it maximal} or 
an {\it $M$-variety}. The study of $M$-varieties  is one of central themes in the study of the 
topology of real algebraic varieties \cite{dk}.

In dimension one, examples of $M$-curves of arbitrary genus are given by Harnack \cite{harnack}.
In higher dimensions, the question of sharpness is far from being completely understood despite 
the existence of a powerful patchworking method due to O. Viro. The latter one is presumed 
to provide maximal projective hypersurfaces of any dimension and degree, but yet only asymptotic 
(by degree) maximality is achieved \cite{io}. Otherwise, apart from abelian varieties,  flag manifolds 
equipped with the standard real structure, and smooth toric varieties \cite{bfmh}, only sporadic 
examples are known.

An interesting phenomenon was detected in the work of G.~Weichold \cite{wei} and F.~Klein 
\cite{k}. They established a correspondence between the topology of the real locus of a curve 
and that of its Jacobian which, in modern language, shows that the Jacobian of a real algebraic 
curve with non-empty real part is maximal if and only if the curve is maximal (see \cite{gh} 
for a contemporary presentation). Recently, relying on the work of M.~Liu and F.~Schaffhauser 
\cite{liu}, E.~Brugall\'e and F.~Schaffhauser \cite{bs} provided a new insight into Weichold and 
Klein's work by proving that  the moduli spaces of vector bundles of coprime rank and degree 
over a real algebraic curve with non-empty real part are maximal if and only if the base curve itself 
is maximal.

The question of extending the maximality phenomenon to the symmetric products of curves 
was addressed by I.~Biswas and S.~D'Mello in \cite{BdM}, where they obtained partial results. 
A complete answer was found by M.~Franz  \cite{franz} who disclosed a much more general 
result: a symmetric product of a space with an involution having fixed points is maximal if and 
only if the space itself is maximal.

Closely related to the symmetric product is the Hilbert scheme of points. In this paper we begin 
a study of its maximality and observe a rather different behavior. As the following two theorems 
show, already in the case of Hilbert squares of surfaces, while the statement in ``only if" direction 
is preserved, it drastically fails in the opposite direction.
\begin{thm}
\label{converse}
Let $X$ be a real nonsingular projective surface.
If the Hilbert square $X^{[2]}$ is maximal, then $X$ is maximal.
\end{thm}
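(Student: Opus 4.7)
The plan is to establish the inequality
\begin{equation*}
s(X^{[2]}) \,\geq\, s(X^{(2)}) + s(X),
\end{equation*}
where $s(Y) := b_{*}(Y(\CC);\FF_2) - b_{*}(Y(\RR);\FF_2)$ denotes the Smith defect of a real projective variety $Y$ (non-negative by Smith's inequality, vanishing iff $Y$ is maximal). Granted this, the non-negativity of the two terms on the right forces $s(X)=0$ whenever $s(X^{[2]})=0$, giving the theorem; Franz's theorem \cite{franz} for symmetric products refines this to $s(X^{(2)})=0 \iff s(X)=0$, though only non-negativity is needed here.

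I would derive the key inequality by examining the Hilbert--Chow morphism $\pi\colon X^{[2]}\to X^{(2)}$, an isomorphism off the image of the diagonal $\Delta\cong X$, with fiber $\PP^1(\CC)\cong S^2$ over complex points of $\Delta$ and $\PP^1(\RR)\cong S^1$ over $R:=X(\RR)$. Over $\CC$, the Leray spectral sequence of $\pi$ has non-zero rows only for $q=0$ and $q=2$, with $E_2^{p,0}=H^p(X^{(2)}(\CC);\FF_2)$ and $E_2^{p,2}=H^p(X(\CC);\FF_2)$; it degenerates at $E_2$ (as follows from G\"ottsche's formula, or directly from the Leray--Hirsch decomposition for the exceptional $\PP^1$-bundle over $\Delta(\CC)$), yielding
\begin{equation*}
b_{*}(X^{[2]}(\CC);\FF_2) \,=\, b_{*}(X^{(2)}(\CC);\FF_2) + b_{*}(X(\CC);\FF_2).
\end{equation*}
Over $\RR$, the analogous spectral sequence has $E_2^{p,0}=H^p(X^{(2)}(\RR);\FF_2)$ and $E_2^{p,1}=H^p(R;\FF_2)$, but now the differential $d_2\colon H^p(R;\FF_2)\to H^{p+2}(X^{(2)}(\RR);\FF_2)$ need not vanish, so one only obtains the upper bound
\begin{equation*}
b_{*}(X^{[2]}(\RR);\FF_2) \,\leq\, b_{*}(X^{(2)}(\RR);\FF_2) + b_{*}(X(\RR);\FF_2).
\end{equation*}
Subtracting the two estimates gives the required lower bound $s(X^{[2]})\geq s(X^{(2)})+s(X)$.

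The main obstacle is justifying the Leray computation in the singular ambient $X^{(2)}$, notably the identification of the direct image sheaves along the singular stratum $\Delta$. A concrete alternative is to apply Mayer--Vietoris to the decomposition of $X^{[2]}(\RR)$ into the closed stratum of length-two subschemes supported on $R$ and the open stratum of unordered pairs of distinct conjugate non-real points, and to compare this with the analogous decomposition of $X^{(2)}(\RR)$; this produces the same real inequality without invoking spectral sequences on a singular base. In either guise, the failure of the real spectral sequence to degenerate --- equivalently, the Betti-number deficit incurred when passing from $R$ to the exceptional $\RR\PP^1$-bundle $\PP_\RR(TR)$ through the singularities of $R^{(2)}$ --- is precisely the quantity that allows $s(X^{[2]})$ to strictly exceed $s(X)$, and it accounts for the loss-of-maximality phenomenon highlighted in the abstract.
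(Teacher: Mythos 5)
Your strategy --- comparing Smith defects along the Hilbert--Chow morphism $\pi\colon X^{[2]}\to X^{(2)}$ --- is genuinely different from the paper's, and two of its three ingredients are sound: the real-side inequality $\beta_*(X^{[2]}(\RR))\le\beta_*(X^{(2)}(\RR))+\beta_*(X(\RR))$ holds because a spectral sequence can only decrease total dimension, and the final deduction from $s(X^{[2]})\ge s(X^{(2)})+s(X)$ is correct. The gap is the complex-side \emph{equality} $\beta_*(X^{[2]})=\beta_*(X^{(2)})+\beta_*(X)$ with $\FF_2$-coefficients. Note that this is exactly the direction a spectral sequence does \emph{not} give for free: you need degeneration, hence an actual computation of $H^*(X^{(2)};\FF_2)$, and neither justification you offer supplies one. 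G\"ottsche's (and Totaro's) formula computes $H^*(X^{[2]})$, not $H^*(X^{(2)};\FF_2)$; and the mod $2$ cohomology of a symmetric square is notoriously neither the naive invariant count nor the reduction of the rational answer --- the $\ZZ/2$-quotient along the diagonal creates extra $2$-torsion classes (Nakaoka's circle of ideas; the phenomenon is visible in the paper itself, where for an Enriques surface $\beta_*(X^{[2]})=154$ strictly exceeds the torsion-free count $150$). As for Leray--Hirsch: the class $e$ with $2e=[E]$ does restrict to the generator of $H^2$ of each exceptional fibre, but to propagate $d_3=0$ from $E_3^{0,2}$ to all of $E_3^{p,2}=H^p(\Delta;\FF_2)$ by module-linearity you need the restriction $H^*(X^{(2)};\FF_2)\to H^*(\Delta;\FF_2)$ to be surjective, and it is not: the diagonal is ``divisible by $2$'' (its normal data are squares of line bundles), so restrictions tend to vanish mod $2$ --- already for $X=\PP^1$ the diagonal in $(\PP^1)^{(2)}=\PP^2$ is a conic and $H^2(\PP^2;\FF_2)\to H^2(\Delta;\FF_2)$ is the zero map. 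If $H^*(X^{(2)};\ZZ)$ does carry $2$-torsion (as one expects, e.g., for a $K3$, from the $\RR\PP^3$-links of the singular stratum), then $\beta_*(X^{(2)})+\beta_*(X)$ strictly exceeds $\beta_*(X^{[2]})=324$ and your key inequality fails. Your Mayer--Vietoris ``alternative'' only re-derives the real-side estimate, which was not the problematic one.

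For comparison, the paper avoids the mod $2$ topology of the symmetric square altogether: it first shows $X(\RR)\ne\emptyset$ by a crude count of total Betti numbers, then uses a real point $p$ to build an equivariant map $\Bl_p(X)\to X^{[2]}$, $x\mapsto\{p,x\}$, proves that the induced map on $H^*(\,\cdot\,;\FF_2)$ is surjective, and transfers maximality via the criterion that the restriction $H^*_G\to H^*$ be onto. If you want to salvage your route you must either compute $H^*(X^{(2)};\FF_2)$ honestly or replace $X^{(2)}$ by an auxiliary space whose mod $2$ cohomology you control; as written, the central identity is unproven and quite possibly false.
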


\begin{thm}
\label{main}
Let $X$ be a maximal real nonsingular projective surface with $H_1(X(\CC),\FF_2)=0.$ Then its 
Hilbert square $X^{[2]}$ is maximal if and only if the real locus $X(\RR)$ of $X$ is connected. 
\end{thm}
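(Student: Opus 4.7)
The plan is to compute $\beta_*(X^{[2]}(\RR);\FF_2)$ explicitly in terms of the topology of $X(\RR)$, in particular the number $c$ of its connected components, and compare it with the value of $\beta_*(X^{[2]}(\CC);\FF_2)$.

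For the complex side, under the vanishing $H_1(X(\CC);\FF_2)=0$, G\"ottsche's formula specialised to $n=2$ yields
$$P(X^{[2]}(\CC);t) = \frac{P(X(\CC);t)^2 + P(X(\CC);t^2)}{2} + t^2\,P(X(\CC);t),$$
so that $\beta_*(X^{[2]}(\CC);\FF_2) = \frac{N(N+3)}{2}$, where $N := \beta_*(X(\CC);\FF_2)$.

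For the real side, a real length-$2$ subscheme of $X$ is of one of three types: an unordered pair of distinct real points, a conjugate pair of non-real points, or a length-$2$ scheme supported at a single real point with a real tangent direction. This gives a decomposition
$$X^{[2]}(\RR) = M_1 \cup_P M_2,$$
where $M_1$ is the topological resolution of $\Sym^2 X(\RR)$ obtained by blowing up the diagonal, $M_2$ is the quotient of the real oriented blowup of $X(\CC)$ along $X(\RR)$ by the (now free) action of complex conjugation, and $P = \PP(TX(\RR))$ is their common boundary, an $S^1$-bundle over $X(\RR)$. Writing $X(\RR) = \bigsqcup_{i=1}^c X_i$, one obtains
$$M_1 = \bigsqcup_{i=1}^c \widetilde{\Sym^2 X_i} \ \sqcup \ \bigsqcup_{1\le i<j\le c}(X_i\times X_j),$$
so that the $\binom{c}{2}$ products $X_i\times X_j$ contribute extra disjoint closed $4$-manifold components of $X^{[2]}(\RR)$, disconnected from the rest.

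I would then compute $\beta_*(\widetilde{\Sym^2 X_i})$ by analysing the symmetric square of a closed surface and its diagonal blow-up, and $\beta_*(M_2)$ via the $\FF_2$-coefficient double-cover sequence associated to the free $\sigma$-action on the real oriented blowup of $X(\CC)$ along $X(\RR)$. Assembling with $\beta_*(P)$ via Mayer--Vietoris on $X^{[2]}(\RR) = M_1 \cup_P M_2$, and using that the maximality of $X$ together with $H_1(X(\CC);\FF_2)=0$ forces the connecting morphisms to vanish, one arrives at a formula of the shape $\beta_*(X^{[2]}(\RR);\FF_2) = \frac{N(N+3)}{2} - f(c)$ for an explicit non-negative $f(c)$ with $f(1)=0$ and $f(c)>0$ for $c\ge 2$. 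The main obstacle is controlling $\beta_*(M_2)$ and verifying the vanishing of the Mayer--Vietoris differentials; these require the equivariant reformulation of maximality, i.e.\ the freeness of $H^*_{\langle\sigma\rangle}(X(\CC);\FF_2)$ over $\FF_2[u]$, to analyse the $\sigma$-action on the cohomology of $X(\CC)\setminus X(\RR)$. Pinning down $f(c)$ and confirming its vanishing precisely at $c=1$ then follows from balancing the contributions of the $\binom{c}{2}$ mixed components $X_i\times X_j$ against the remaining pieces.
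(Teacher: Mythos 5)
Your overall strategy --- Göttsche's formula on the complex side, the decomposition of $X^{[2]}(\RR)$ into the resolved symmetric squares of the components of $X(\RR)$, the mixed products $X_i\times X_j$, and the quotient of the blowup of $X(\CC)$ along $X(\RR)$, all glued along $\PP(TX(\RR))$, then Mayer--Vietoris --- is exactly the route the paper takes. The difficulty is that the proposal stops precisely where the proof begins, and the one concrete mechanism you commit to for the crucial step is false. You claim that maximality of $X$ together with $H_1(X(\CC);\FF_2)=0$ forces the Mayer--Vietoris connecting morphisms to vanish. They do not, even in the maximal connected case: for $X=\PP^2$ the gluing locus $P=\PP_\RR T^*\RR\PP^2$ is the real flag manifold, with $\beta_1(P)=2$, while the map $\mu\colon H_1(P)\to H_1(M_1)\oplus H_1(M_2)$ turns out to have rank $1$, so the connecting morphism $H_2(X^{[2]}(\RR))\to H_1(P)$ has rank $1$ --- and yet $(\PP^2)^{[2]}$ \emph{is} maximal. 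What actually has to be established is the exact value of $\rank\mu$ (it equals the number $r$ of components of $X(\RR)$ whenever $H_1(X)=0$, irrespective of whether $X^{[2]}$ is maximal), and this is the substantive geometric content of the theorem: one must show that a class $\xi\in H_1(\PP_\RR T^*F_i)$ dies in $H_1$ of the symmetric-square side if and only if $\xi\cap\omega=0$, where $\omega$ is the characteristic class of the tautological double cover (proved by lifting loops to $F_i\times F_i\setminus\Delta F_i$ and averaging under the deck transformation), and that on the other side the map to $H_1(M_2)$ is nonzero on the fibre classes and surjective. Maximality of $X$ enters here --- via the Smith sequence, to get surjectivity of $H_1(X(\RR))\to H_1(X/\Conj)$ --- not to kill differentials; a block-matrix rank count then yields $\rank\mu=r$. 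None of this is in the proposal, and the heuristic you offer in its place would lead to wrong Betti numbers already for $\PP^2$.

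Two further points. First, the deferred computations ($\beta_*$ of the resolved symmetric squares via the Macdonald and Kallel--Salvatore formulas, $\beta_*(M_2)$ via the Smith sequence and Poincar\'e--Lefschetz duality) are genuinely needed: as written the proposal establishes neither implication, since the ``if'' direction needs the exact value $f(1)=0$ and the ``only if'' direction needs strict positivity of $f(c)$ for $c\ge 2$ (the answer is $f(c)=4(c-1)$), and both require the completed computation. Second, you can spare yourself most of the bookkeeping: since $X^{[2]}(\RR)$ is a closed $4$-manifold, $\beta_*-\chi=4\beta_1$, and $\chi(X^{[2]}(\RR))$ is known in closed form from the real Göttsche-type Euler-characteristic formula; hence only $\beta_1(X^{[2]}(\RR))$ needs to be extracted from the Mayer--Vietoris sequence, which is exactly the reduction the paper uses.
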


As a consequence of Theorem \ref{main}, we get the following results:

\begin{cor}
\label{main-corollary}  
Let $X$ be a real nonsingular projective surface satisfying $H_1(X(\CC),\FF_2)$ $=0$ and 
$h^{2,0}(X(\CC)) >0$, then $X^{[2]}$ is not maximal. In particular, this is the case if $X$ is 
a K3-surface, or a surface birational to it.
\end{cor}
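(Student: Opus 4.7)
The plan is to reduce the corollary to a disconnectedness claim via Theorems \ref{converse} and \ref{main}, and then to determine the number of connected components of $X(\RR)$ explicitly by combining the Lefschetz fixed point formula with the Hodge decomposition.

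First, if $X$ is not maximal, the contrapositive of Theorem \ref{converse} already gives that $X^{[2]}$ is not maximal. So I may assume $X$ itself is maximal, and by Theorem \ref{main} it suffices to show that $X(\RR)$ is disconnected whenever $X$ is maximal with $p_g := h^{2,0}(X(\CC)) > 0$ and $H_1(X(\CC), \FF_2) = 0$. Set $c := \beta_0(X(\RR), \FF_2)$.

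The computation of $c$ will rest on three elementary inputs: (i) each connected component of $X(\RR)$ is a closed (hence $\FF_2$-orientable) surface, so $\sum_i \beta_i(X(\RR), \FF_2) = 4c - \chi(X(\RR))$; (ii) the hypothesis $H_1(X(\CC), \FF_2) = 0$, combined with Poincar\'e duality and the universal coefficient theorem, forces $H^2(X(\CC), \ZZ)$ to be $2$-torsion free, whence $\sum_i \beta_i(X(\CC), \FF_2) = 2 + 2 p_g + h^{1,1}$; and (iii) applied to complex conjugation, the Lefschetz fixed point formula gives $\chi(X(\RR)) = 2 + \tr(c^* \mid H^2(X(\CC), \QQ))$, where $c^*$ is $\CC$-linear and swaps $H^{2,0}$ with $H^{0,2}$, so its trace on $H^{2,0} \oplus H^{0,2}$ vanishes while on $H^{1,1}$ it equals $h^{1,1}_+ - h^{1,1}_-$, the difference of dimensions of the $\pm 1$-eigenspaces of $c^*$ on $H^{1,1}_\RR$. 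Substituting (iii) into (i) and equating the result to (ii) via maximality yields
\[
2c \;=\; 2 + p_g + h^{1,1}_+.
\]

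Because $X$ is real projective, a real ample line bundle provides a $c^*$-invariant K\"ahler class in $H^{1,1}_\RR$, so $h^{1,1}_+ \geq 1$; combined with $p_g \geq 1$, this forces $c \geq 2$, i.e.\ $X(\RR)$ is disconnected, and Theorem \ref{main} then closes the argument. The final sentence of the corollary follows because a smooth projective surface birational to a K3 surface is simply connected (blowups of smooth surfaces preserve $\pi_1$), so $H_1(X(\CC), \FF_2) = 0$, while $p_g$ is a birational invariant equal to $1$ for K3 and for any of its smooth blowups. I expect the only delicate point to be the trace computation in (iii): one has to use that, although $c$ is antiholomorphic on $X(\CC)$, the induced map $c^*$ is $\CC$-linear on cohomology and interchanges $H^{p,q}$ with $H^{q,p}$, and this is precisely what makes the trace on $H^{2,0} \oplus H^{0,2}$ vanish.
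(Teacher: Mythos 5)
Your reduction of the corollary to the maximal case via (the contrapositive of) Theorem \ref{converse}, and then to disconnectedness of $X(\RR)$ via Theorem \ref{main}, is exactly the paper's strategy; the difference is in how disconnectedness is proved. The paper's Lemma \ref{hodge-positive} combines the maximality identity $2\beta_0(X(\RR))+\beta_1(X(\RR))=\sum h^{p,q}$ with the Comessatti inequality $2-\chi(X(\RR))\le h^{1,1}$, quoted as a known result, to obtain $\beta_0(X(\RR))\ge 1+\tfrac12 h^{2,0}+h^{1,0}>1$. You instead unpack that black box: the Lefschetz fixed point formula together with the action of $c^*$ on the Hodge decomposition gives the exact count $2c=2+p_g+h^{1,1}_+$, of which the paper's inequality is the consequence $h^{1,1}_+\ge 0$. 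Your inputs (i)--(iii) are correctly justified, including the $2$-torsion-freeness of $H^2(X;\ZZ)$ and the vanishing of the trace on $H^{2,0}\oplus H^{0,2}$; so your route is sound and in fact slightly sharper than the paper's.

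One step is wrong, although it happens not to matter. The first Chern class of a real ample line bundle is \emph{anti}-invariant, not invariant: for a line bundle with a real structure one has $c^*c_1(L)=-c_1(L)$ (concretely, conjugation reverses the orientation of a real line in $\PP^2$, so $c^*h=-h$ there). Hence the ample class contributes to $h^{1,1}_-$, not to $h^{1,1}_+$, and indeed $h^{1,1}_+=0$ for $\PP^2$ or $\PP^1\times\PP^1$ with their standard real structures. So you cannot conclude $h^{1,1}_+\ge 1$ this way. Fortunately the step is superfluous: your identity already gives $2c=2+p_g+h^{1,1}_+\ge 2+p_g\ge 3$, and since $c$ is an integer this forces $c\ge 2$. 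With the ample-class remark deleted the argument is complete; the final paragraph on surfaces birational to K3's is fine.
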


\begin{cor}
\label{rational} 
If $X$ is a maximal real nonsingular rational surface with $X(\RR)$ connected, then 
$X^{[2]}$ is maximal. In particular, this is the case for the projective plane and ruled surfaces
$\PP_{\PP^1}(\cal E),$ where $\cal E$ is a rank 2 vector bundle equipped 
with a real structure that lifts the standard real structure on $\PP^1.$
\end{cor}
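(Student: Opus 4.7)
The plan is to deduce the general statement directly from Theorem \ref{main}, and then verify the two explicit families of examples.

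For the general claim, the only hypothesis of Theorem \ref{main} that is not part of the assumptions of Corollary \ref{rational} is the vanishing $H_1(X(\CC),\FF_2)=0$. But any complex rational surface is obtained from $\PP^2$ (or from a Hirzebruch surface) by a finite sequence of blowups and blowdowns at smooth points, and both $\PP^2$ and the Hirzebruch surfaces are simply connected; since blowing up a point preserves simple connectedness, $X(\CC)$ is simply connected, so in particular $H_1(X(\CC),\FF_2)=0$. Thus Theorem \ref{main} applies and gives the maximality of $X^{[2]}$.

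For the ``in particular'' part, I would verify the two cases separately. For $X=\PP^2$ with its standard real structure, the real locus $\RR\PP^2$ is connected, and the classical computation $\sum_i\beta_i(\RR\PP^2;\FF_2)=3=\sum_i\beta_i(\PP^2(\CC);\FF_2)$ shows that $\PP^2$ is maximal, so the general claim applies. For $X=\PP_{\PP^1}(\cal E)$ with a real structure lifting the standard one on $\PP^1$, the projection $X\to\PP^1$ is equivariant, and under the assumption that the real locus is nonempty (which is forced by maximality), the real structure restricts fiberwise to an anti-holomorphic involution on $\PP^1$ with real points, so $X(\RR)\to\PP^1(\RR)=S^1$ is a fiber bundle with fiber $\RR\PP^1=S^1$; hence $X(\RR)$ is either a torus or a Klein bottle, in either case connected, with $\sum_i\beta_i(X(\RR);\FF_2)=4$. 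On the other hand $X(\CC)$ is a Hirzebruch surface, with Betti numbers $(1,0,2,0,1)$ summing to $4$, so $X$ is maximal, and Theorem \ref{main} again gives the conclusion.

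The proof is thus essentially formal once Theorem \ref{main} is available; the only mild point is the verification for ruled surfaces, where one must check that ``lifts the standard real structure on $\PP^1$'' together with nonempty real locus forces the fiber bundle description above. I do not expect this step to present any real obstacle, since the real structure on each fiber $\PP(\cal E_x)$ with $x\in\PP^1(\RR)$ is an involution on $\PP^1$, and nonemptiness of the real locus of $X$ rules out the fixed-point-free (quaternionic) case along a nonempty open subset of $S^1$, hence on the whole of $S^1$ by continuity.
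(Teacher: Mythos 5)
Your proposal is correct and follows the paper's own route exactly: the paper proves this corollary in one line as an immediate consequence of Theorem \ref{main}, and your fleshing out (simple connectedness of rational surfaces giving $H_1(X(\CC),\FF_2)=0$, plus the Betti-number checks for $\PP^2$ and the Hirzebruch surfaces) is what that one line tacitly relies on. The only wrinkle is in the ruled-surface verification, where you derive nonemptiness of $X(\RR)$ from maximality while maximality is part of what is being verified for these examples; this is harmless but better avoided by noting that a real structure on the bundle $\cal E$ itself is an anti-linear \emph{involution}, hence has a real form $\RR^2\subset\cal E_x$ over every $x\in\PP^1(\RR)$, so the fixed locus of each fiber $\PP(\cal E_x)$ is a copy of $\RR\PP^1$ and the quaternionic (fixed-point-free) case cannot occur at all.
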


When $X(\CC)$ has a positive first Betti number, we have only a partial answer.

\begin{thm}
\label{b1>0}
Let $X$ be a maximal real nonsingular projective surface with  $\Tors_2 H_1(X(\CC), \ZZ)=0$.
Then:
\begin{itemize}
\item[ 1)] If $X(\RR)$ is connected, then the Hilbert square $X^{[2]}$ is maximal.
\item[ 2)] 
If $\beta_0(X(\RR))>1+ \beta_1(X(\CC)),$ then  the Hilbert square $X^{[2]}$ is not maximal.
\end{itemize}
\end{thm}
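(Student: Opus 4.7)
The strategy is to extend the methods used to prove Theorems~\ref{converse} and \ref{main} by carefully tracking the extra contributions to the total $\FF_2$-Betti numbers that arise when $H_1(X(\CC),\FF_2)$ is allowed to be nonzero; the torsion-freeness assumption $\Tors_2 H_1(X(\CC),\ZZ)=0$ is what keeps these contributions controllable. Via the universal coefficient theorem and Poincaré duality it implies that the $\FF_2$-cohomology of $X(\CC)$ behaves like its rational cohomology, and (through a Göttsche-type calculation) that
$$\beta_*(X^{[2]}(\CC),\FF_2) \;=\; \binom{b+1}{2}+b, \qquad b:=\beta_*(X(\CC),\FF_2).$$

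To analyse $X^{[2]}(\RR)$, I would decompose it as $A\cup C$, where $A$ is the topological blow-up of $\Sym^2(X(\RR))$ along its diagonal (the closure of the locus of pairs of distinct real points together with the real double points) and $C$ is the closure in $X^{[2]}(\RR)$ of the stratum $(X(\CC)\setminus X(\RR))/\sigma$ of complex conjugate pairs; their intersection is the $\PP^1(\RR)$-bundle $E=\PP(T_X|_{X(\RR)})(\RR)$. The piece $\beta_*(A)$ is computed from the $\beta_i(X(\RR))$ via a symmetric-square formula for surfaces combined with a codimension-two real blow-up analysis; $\beta_*(C)$ is computed from $\beta_*(X(\CC))$ and $\beta_*(X(\RR))$ via the Smith exact sequence for the free involution $\sigma$ on $X(\CC)\setminus X(\RR)$ together with the maximality of $X$; and Mayer--Vietoris for $A\cup C$ yields
$$\beta_*(X^{[2]}(\RR)) \;=\; \beta_*(A)+\beta_*(C)-\beta_*(E)+2\kappa, \quad \kappa\ge 0,$$
where $\kappa$ measures the failure of the connecting maps to vanish.

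Combining these computations with the Smith-Thom equality $\sum\beta_i(X(\RR))=\sum b_i(X(\CC))$ should cause the contributions from $b_2(X(\CC))$ and the higher Betti numbers to cancel, reducing the defect $\beta_*(X^{[2]}(\CC))-\beta_*(X^{[2]}(\RR))$ to an elementary expression in $\beta_0(X(\RR))$ and $\beta_1(X(\CC))$, of the anticipated shape
$$\binom{\beta_0(X(\RR))-1}{2}-(\beta_0(X(\RR))-1)\,\beta_1(X(\CC))+2\kappa$$
(or a lower bound of this form). Part~1 then follows by setting $\beta_0(X(\RR))=1$: the binomial term vanishes, and connectedness of $X(\RR)$ (hence of $A$, $C$, and $E$) forces $\kappa=0$ through the surjectivity part of the Mayer--Vietoris sequence. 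Part~2 follows because the hypothesis $\beta_0(X(\RR))>1+\beta_1(X(\CC))$ is precisely the condition under which the binomial term strictly dominates the linear correction, guaranteeing a positive defect regardless of $\kappa$.

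The principal obstacle will be working in characteristic two, where the cohomology of a symmetric square or a quotient by an involution cannot be read off from invariants. I would systematically replace any such computations by Smith-sequence arguments, and it is precisely here that $\Tors_2 H_1(X(\CC),\ZZ)=0$ is used to guarantee that the relevant $\FF_2$-ranks coincide with their rational counterparts. A secondary difficulty is verifying $\kappa=0$ in Part~1, which via the Leray--Hirsch theorem for the bundle $E\to X(\RR)$ should reduce to checking surjectivity of a natural restriction map, using connectedness of $X(\RR)$ together with maximality of $X$.
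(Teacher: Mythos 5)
Your cut-and-paste decomposition of $X^{[2]}(\RR)$ into $A\cup C$ glued along $E=\PP_\RR(T^*X(\RR))$ is exactly the right geometric setup, and the reduction to Mayer--Vietoris plus Smith sequences is the frame the actual proof uses (the paper streamlines the bookkeeping by computing only $\beta_1$ of each piece and recovering $\beta_*(X^{[2]}(\RR))$ from $\chi(X^{[2]}(\RR))$ and Poincar\'e duality, but that is a convenience, not a conceptual difference). The problems are quantitative and, for part 2, structural. First, your count $\beta_*(X^{[2]})=\binom{b+1}{2}+b$ omits the correction $-2\beta_1$: the factor-exchange involution acts by $(-1)^{ij}$ on $H_i\otimes H_j$, so odd-degree classes contribute antisymmetric rather than symmetric tensors; the correct formula under $\Tors_2 H_*(X;\ZZ)=0$ is $\tfrac12 b(b+1)+b-2\beta_1$. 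Second, the anticipated defect $\binom{r-1}{2}-(r-1)\beta_1+2\kappa$ cannot be right: already for $\beta_1=0$ the true defect is $4(r-1)$, not $\binom{r-1}{2}$ (so it fails at $r=2$, where your binomial term vanishes), and for $\beta_1>0$ your binomial term dominates the linear one only when $r>2\beta_1+2$, which does not match the hypothesis $r>1+\beta_1$. Worse, the sign of the correction is the crux: a larger Mayer--Vietoris kernel makes $\beta_*(X^{[2]}(\RR))$ larger and the defect \emph{smaller}, so no formula with a nonnegative additive $\kappa$ can prove part 2. In fact the defect equals $4(\rank\mu-1-\beta_1)$, where $\mu:\bigoplus_{i}H_1(\PP_\RR T^*F_i)\to\bigoplus_{i}H_1(\HH_i)$ is the degree-one Mayer--Vietoris map, and $\rank\mu$ is \emph{not} a function of $(r,\beta_1)$: it depends on which classes $[F_i]$ survive in $H_2(X/\Conj)$. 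Ruled surfaces over a maximal curve of genus $g\ge1$ (where $1<r\le\beta_1$ and $X^{[2]}$ is maximal) versus abelian surfaces (where it is not) show that in the intermediate range the answer is not determined by Betti-number bookkeeping.

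The missing idea is therefore a geometric lower bound $\rank\mu\ge 2+\beta_1$ when $r>1+\beta_1$. The paper gets it as follows: join a point of $F_1$ to a point of each $F_i$ by a path $\gamma_i$ and form the $r-1$ conjugation-invariant circles $\gamma_i\cup\Conj(\gamma_i)$; if $r-1>\beta_1$, some nontrivial combination bounds a surface in $X$, whose projection to $X/\Conj$ is a mod $2$ cycle meeting some $F_k$ in an odd number of points, so $[F_k]\ne0$ in $H_2(X/\Conj)$; dually, a generic surface $\Sigma\subset X/\Conj$ meeting that $F_k$ oddly exhibits a nontrivial combination of the fiber classes $w_i$ dying in $H_1(\HH_0)$. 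A block-matrix rank count --- using that the $\bigoplus_{i\ge1}H_1(\HH_i)$-block has rank exactly $r$ on the fiber classes and that the $H_1(\HH_0)$-block has rank $1+\beta_1$ by maximality (surjectivity of $H_1(E)\to H_1(\HH_0)$ via the Smith sequence) --- then forces $\rank\mu>1+\beta_1$. Your part 1 is closer to correct in spirit: the surjectivity you need does follow from maximality; but the assertion that connectedness forces $\kappa=0$ is false as stated (the kernel of $H_*(E)\to H_*(A)\oplus H_*(C)$ is nonzero already for $X=\PP^2$); what one actually proves is $\rank\mu=1+\beta_1$, using that the kernel of $H_1(\PP_\RR T^*F_1)\to H_1(\HH_1)$ is precisely $\{\xi:\xi\cap\omega=0\}$ for the tautological class $\omega$.
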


Examples of maximal surfaces $X$ with torsion free homology and $1<\beta_0(X(\RR))<\beta_1(X)$ 
are given by the ruled surfaces $X=\PP_C(E),$ where $E$ is a rank $2$ complex vector bundle over 
a maximal curve $C$ of positive genus. For such surfaces, the real locus $X(\RR)$ is disconnected, 
and we show that their Hilbert square is maximal, in contrast with situations described in 
Corollary \ref{main-corollary}. On the other hand, if $X$ is a maximal torus of complex dimension 2, 
we notice that $X^{[2]}$ is not maximal while $\beta_0(X(\RR))=\beta_1(X(\CC)).$

For surfaces $X$ with $2$-torsion in the homology with integer coefficients, one can still 
compute the Betti numbers of the Hilbert square $X^{[2]}$  \cite{square}. For real maximal 
Enriques surfaces, we notice that $X^{[2]}$ is never maximal. We should point out that the 
real locus of maximal Enriques surfaces is always disconnected (see \cite{enriques} for a full 
classification of topological types).

In all of the enumerated results, as well as throughout the whole paper, when we speak about the 
maximality of a Hilbert scheme of points $X^{[n]},$ we mean the maximality of $X^{[n]}$ with 
respect to the real structure which is {\it canonically inherited} from a real structure on $X$. Thus, 
non maximality with respect to this kind of real structures does not exclude that $X^{[n]}$ may admit 
another real structure with respect to which it becomes maximal. In particular, at the current stage 
we do not know if there exist (real or complex) $K3,$ or Enriques, surfaces $X$ for which $X^{[n]}$ 
admits however a maximal real structure.

\begin{rmk}
{\rm 
All of the above definitions and the results, including their proofs, literally extend from real algebraic 
setting to compact complex analytic manifolds equipped with an anti-holomorphic involution.
}
\end{rmk}

\begin{rmk} 
{\rm 
Theorem \ref{converse}, under the additional assumption $X(\RR)\ne \emptyset,$ was also obtained by 
L.~Fu in a recent preprint \cite{fu}, where the maximality  of various moduli spaces of 
sheaves/bundles/subschemes on maximal varieties is disclosed.
}
\end{rmk}


\subsection*{Acknowledgements} We are deeply thankful to L.~Fu for useful discussions and 
sharing with us his rich of ideas preprint  \cite{fu} on the subject. The first author acknowledges 
support from the grant ANR-18-CE40-0009 of French Agence Nationale de Recherche. The second 
author acknowledges the support of a Professional Travel Grant from Vanderbilt University in the 
early stages of this project.


\newpage
\notations


\begin{itemize}
\item[ 1)] By a variety equipped with a real structure we mean a pair $(Y,c)$ consisting of a complex 
variety $Y$ and an anti-holomorphic involution $c:Y\ra Y$. 

\item[ 2)]  Let $Y$ be an algebraic variety defined over $\RR,$ and $G$ denote the Galois group 
${\text{Gal}}(\CC/\RR).$ The group $G$ is a cyclic group of order $2$ and  acts on the locus of 
complex points $Y(\CC).$ The non-trivial element of $G$ acts as an anti-holomorphic involution, 
which we will denote by $c,$ and the fixed point set of the action coincides with the set of real points 
of $Y.$ The pair $(Y, c)$ is a variety equipped with a real structure. To mediate between the notations 
traditionally used for varieties equipped with real structures and for algebraic varieties defined over 
$\RR,$ we will use from now on $Y$ to denote the set of complex points, and $Y(\RR)$ the set of 
real points. 

\item[ 3)] Unless  explicitly stated, all the homology and cohomology groups have coefficients in the 
field $\FF_2=\ZZ/2\ZZ$. We use  $\beta_i(\,\cdot\,)$ and $b_i(\,\cdot\,)$  to denote the Betti numbers 
when the coefficients are in $\FF_2$ or in $\QQ,$ respectively. We will use  the notation $\beta_*(\,\cdot\,)$  
and $b_*(\,\cdot\,)$ for the corresponding total Betti numbers.
\end{itemize}


\section{Preliminaries}


\subsection{Smith theory}
\label{Smith.theory}\label{s7}


Most of the results cited in this section are due to P.A. Smith; proofs can be found, e.g., in \cite[Chapter 3]{bredon} 
and \cite[Chapter 1]{dik}.

\smallskip

Throughout the section we
consider a topological space $X$ equipped with a cellular involution $c:X\to X,$ i.e., an involution $c$ that transforms 
cells into cells and acts identically on each invariant cell. Denote by $F=\Fix c,\,\bar X=X/c,$ and  let 
$\inc: F\hookrightarrow X$ and $\pr: X\ra \bar X$ be the natural inclusion and projection, respectively.

Introduce the \emph{Smith chain complexes}
\begin{align*}
\Sm_*(X)&=\ker[(1+c_*)\, :S_*(X)\to S_*(X)],\\
\Sm_*(X,F)&=\ker[(1+c_*)\, :S_*(X,F)\to S_*(X,F)].
\end{align*}
and \emph{Smith homology} $H_r(\Sm_*(X))$ and $H_r(\Sm_*(X,F))$. There is a canonical isomorphism 
$\Sm_*(X,F)=\im[(1+c_*)\, :S_*(X)\to S_*(X)].$ The \emph{Smith sequences} are the long homology and
cohomology exact sequences associated with the short exact sequence of complexes
\begin{equation}
\label{smith-sequence}
0\ra\Sm_*(X)\xrightarrow{\text{inclusion}}S_*(X)
\xrightarrow{1+c_*}\Sm_*(X,F)\ra 0.
\end{equation}

Analyzing this sequence notice, first that we have a canonical canonical splitting 
$\Sm_*(X)=S_*(F)\oplus\im(1+c_*).$ The transfer homomorphism 
$\tr^*:S_*(\bar X,F)\to\Sm_*(X,F)$ is an isomorphism \cite[Chapter 3]{bredon} 
(see also {\it op. cit.} for the cohomology version). In view of the above identifications, 
the long exact sequences associated to (\ref{smith-sequence}) yield:

\begin{thm}
\label{Smith.seq}
There are two natural, in respect to equivariant maps, exact sequences, called  
(homology and cohomology) \emph{Smith sequences} of~$(X,c)$:
$$
\begin{gathered}
\cdots \ra H_{p+1}(\bar X,F)\xrightarrow[]{\Delta} H_p(\bar X,F)\oplus H_p(F)
  \xrightarrow{\tr^*+\inc_*} H_p(X)
 \xrightarrow{\pr_*} H_p(\bar X,F)\ra \rlap{\,},\\
\ra H^p(\bar X,F)\xrightarrow{\pr^*}H^p(X)\xrightarrow{{\tr_*}\oplus{\inc^*}} 
  H^p(\bar X,F)\oplus H^p(F)\xrightarrow{\Delta} H^{p+1}(\bar X,F)\ra\cdots  \rlap{\,.}
\end{gathered}
$$

The homology and cohomology connecting homomorphisms $\Delta$ are given by
$$
x\mapsto x\cap\omega\oplus\partial x\quad\text{and}\quad x\oplus f\mapsto
x\cup\omega+\delta f,
$$
respectively, where $\omega \in H^1(\bar X\setminus F)$ is the characteristic class 
of the double covering $X\setminus F\to\bar X\setminus F$. The images of 
${\tr^*}+\inc_*$ and~$\pr^*$ consist of invariant classes:
$\im\tr^*\subset\ker(1+c_*)$ and $\im\pr^*\subset\ker(1+c^*)$.
\end{thm}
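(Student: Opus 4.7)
The plan is to derive both sequences directly as the long homology and cohomology exact sequences of the short exact sequence of complexes displayed in (\ref{smith-sequence}), combined with the two structural identifications singled out just before the theorem statement. Concretely, I first verify cell by cell that the splitting $\Sm_*(X) = S_*(F) \oplus \im(1+c_*)$ is compatible with the boundary operator; this is automatic because $c$ is cellular and acts identically on invariant cells, so each cell of $X$ is either in $F$ (contributing to $S_*(F)$) or pairs with a distinct image under $c$ (contributing to $\im(1+c_*)$). Together with the transfer isomorphism $\tr^*\colon S_*(\bar X,F) \xrightarrow{\sim} \Sm_*(X,F) = \im(1+c_*)$ that sends $\bar\sigma \mapsto \sigma + c\sigma$, this yields the identifications
$$H_p(\Sm_*(X)) = H_p(F) \oplus H_p(\bar X,F), \qquad H_p(\Sm_*(X,F)) = H_p(\bar X,F).$$
Substituting these into the long exact sequence associated to (\ref{smith-sequence}) produces the claimed homology Smith sequence. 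The cohomology version follows symmetrically by applying $\Hom(-,\FF_2)$ to (\ref{smith-sequence}) and using the dual identifications.

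Next I would identify the middle arrow with $\tr^* + \inc_*$ and the right-hand arrow with $\pr_*$. On the $H_*(F)$ summand of $H_*(\Sm_*(X))$ the map into $H_*(X)$ comes from the inclusion $S_*(F) \hookrightarrow S_*(X)$, namely $\inc_*$, while on the $H_*(\bar X,F)$ summand it factors as the transfer followed by the inclusion of invariant chains into $S_*(X)$, giving the composite still denoted $\tr^*$. The right-hand map is induced at the chain level by $1+c_*\colon S_*(X) \to \Sm_*(X,F) \cong S_*(\bar X,F)$; on a cell $\sigma$ it sends $\sigma \mapsto \sigma + c\sigma \mapsto \bar\sigma$ (and vanishes on $F$), hence is $\pr_*$. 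The invariance statements $\im\tr^* \subset \ker(1+c_*)$ and $\im\pr^* \subset \ker(1+c^*)$ are then formal: $\tr^*\alpha = \sigma + c\sigma$ is manifestly $c$-invariant, and $\inc_*\beta$ is supported on $F$, which is pointwise fixed.

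The main obstacle is the explicit formula $\Delta(x) = x \cap \omega \oplus \partial x$ for the connecting homomorphism. I would compute it by the usual snake-lemma recipe: given a relative cycle $\alpha \in Z_{p+1}(\bar X,F)$, lift $\tr^*(\alpha) \in \Sm_{p+1}(X,F)$ to a chain $\widetilde\alpha \in S_{p+1}(X)$, and form $\partial\widetilde\alpha$, which lies in $\Sm_p(X) = S_p(F) \oplus \im(1+c_*)$. Its $S_p(F)$-component is the relative boundary $\partial\alpha$ pushed into $F$, which accounts for the second summand of $\Delta(x)$. The $\im(1+c_*)$-component records the failure of the lift $\widetilde\alpha$ to be invariant along cells of $\bar X$ adjacent to $F$; this discrepancy is detected by pairing $\alpha$ with the characteristic class $\omega \in H^1(\bar X\setminus F)$ of the free double cover $X\setminus F \to \bar X \setminus F$, producing the cap product $\alpha \cap \omega$. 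The cohomology formula $\delta f + x \cup \omega$ is obtained by the same snake-lemma calculation applied to the dualised short exact sequence, with cup product replacing cap product.
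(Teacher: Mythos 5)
Your proposal is correct and follows exactly the route the paper itself takes (and that it attributes to Bredon and Degtyarev--Itenberg--Kharlamov): form the short exact sequence of Smith complexes, use the splitting $\Sm_*(X)=S_*(F)\oplus\im(1+c_*)$ and the transfer isomorphism to identify the outer terms, and read off the maps and the connecting homomorphism from the snake lemma. Your write-up merely fills in the cell-level verifications that the paper leaves implicit, so there is nothing to flag.
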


The following immediate consequences of  Theorem \ref{Smith.seq}, 
which we state in the homology setting, 
have an obvious counterpart for cohomology:

\begin{cor}
Let $(X,c)$ be a topological space equipped with a cellular involution. Then:
\label{ineq}
\begin{enumerate}
\item[ 1)] $\dim~H_*(F)+2\sum_{p}\dim\coker({\tr^p}+\inc_p)=\dim~H_*(X)$
(Smith identity);
\item[ 2)] $\dim~H_*(F)\le\dim~H_*(X)$ (Smith inequality);
\end{enumerate}
\end{cor}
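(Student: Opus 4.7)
The plan is to extract the identity directly from the long exact homology Smith sequence of Theorem \ref{Smith.seq}. Denote the three types of maps by
$\alpha_p : H_{p+1}(\bar X, F) \to H_p(\bar X, F) \oplus H_p(F)$,
$\beta_p : H_p(\bar X, F) \oplus H_p(F) \to H_p(X)$, and
$\gamma_p : H_p(X) \to H_p(\bar X, F)$,
and write $a_p = \dim H_p(X)$, $f_p = \dim H_p(F)$, $q_p = \dim H_p(\bar X, F)$, $k_p = \dim \coker \beta_p$.

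Next I would use exactness at three consecutive positions to express $a_p$ in closed form. Exactness at $H_p(X)$ gives both $a_p = \dim \im \beta_p + k_p$ and $\dim \im \gamma_p = k_p$. Exactness at $H_p(\bar X, F) \oplus H_p(F)$ gives $\dim \im \beta_p = q_p + f_p - \dim \im \alpha_p$. Exactness at $H_p(\bar X, F)$, combined with the previous, yields $\dim \im \alpha_{p-1} = q_p - k_p$, equivalently $\dim \im \alpha_p = q_{p+1} - k_{p+1}$. Substituting back produces the formula
\[
a_p \;=\; q_p + f_p - q_{p+1} + k_{p+1} + k_p, \qquad p \ge 0,
\]
with the convention that $\alpha_{-1} = 0$, a convention that forces $k_0 = q_0$ through the same identification.

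Summing over $p \ge 0$, the $q$-contribution telescopes to the single boundary term $q_0$, while the two $k$-sums combine to $2\sum_p k_p - k_0$. Since $k_0 = q_0$, these boundary pieces cancel and one obtains the Smith identity
\[
\sum_p a_p \;=\; \sum_p f_p + 2 \sum_p k_p,
\]
which is assertion (1). Assertion (2), the Smith inequality, is then immediate because each $k_p$ is non-negative.

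The only genuine subtlety, hardly an obstacle, is the book-keeping at the boundary $p = 0$ where the sequence terminates. Once one checks that $\gamma_0 = \pr_*$ is surjective (equivalently $\alpha_{-1} = 0$, so $k_0 = q_0$), the telescoping is automatic and the identity emerges without any stray boundary corrections; no further input beyond exactness of the Smith sequence is required.
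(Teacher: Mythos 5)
Your derivation is correct and is exactly the standard rank count in the long exact Smith sequence that the paper leaves implicit when it calls the corollary an ``immediate consequence'' of Theorem \ref{Smith.seq}; the boundary bookkeeping ($k_0=q_0$ from surjectivity of $\pr_*$ in degree $0$) is handled properly and the telescoping goes through. No gap.
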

Recall that by definition $H^1(\FF_2;H_*(X))=\ker(1+c_*)/\im(1+c_*).$

\begin{defn}
Let $(X,c)$ be a topological space equipped with a cellular involution.
If  $\dim~H_*(F)=\dim~H_*(X)$,
one says that $c$ is an \emph{$M$-involution}, and $X$ is called \emph{maximal}, or an \emph{$M$-variety}. 
\end{defn}

\begin{cor}
\label{maxSmith}
Let $(X,c)$ be a topological space equipped with a cellular involution. Then $X$ is an an $M$-variety 
if and only if for every $k\geq 0,$ the sequence 
$$
0\ra H_{k+1}(\bar X, F)\xrightarrow{\Delta} H_{k}(\bar X, F)\oplus H_k(F)\ra H_k(X)\ra 0
$$
is exact.
\end{cor}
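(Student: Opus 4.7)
The plan is to deduce the corollary from the long exact Smith sequence of Theorem~\ref{Smith.seq} together with the Smith identity of Corollary~\ref{ineq}(1). First I would isolate the segment of the homology Smith sequence around degree $k$,
\[
\cdots\xrightarrow{\pr_*}H_{k+1}(\bar X,F)\xrightarrow{\Delta}H_k(\bar X,F)\oplus H_k(F)\xrightarrow{\tr^*+\inc_*}H_k(X)\xrightarrow{\pr_*}H_k(\bar X,F)\xrightarrow{\Delta}\cdots,
\]
and observe that the claimed four-term sequence
\[
0\to H_{k+1}(\bar X,F)\xrightarrow{\Delta}H_k(\bar X,F)\oplus H_k(F)\xrightarrow{\tr^*+\inc_*}H_k(X)\to 0
\]
is exact for every $k\ge 0$ if and only if $\pr_*\colon H_k(X)\to H_k(\bar X,F)$ vanishes for every $k\ge 0$. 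Indeed, injectivity of $\Delta$ out of $H_{k+1}(\bar X,F)$ is equivalent, by exactness, to the vanishing of $\pr_*$ coming into $H_{k+1}(\bar X,F)$, and surjectivity of $\tr^*+\inc_*$ onto $H_k(X)$ is equivalent to the vanishing of $\pr_*$ out of $H_k(X)$.

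Next I would rewrite this simultaneous vanishing as the surjectivity of $\tr^*+\inc_*$ in every degree, i.e., as $\coker(\tr^p+\inc_p)=0$ for every $p$. Plugging this into the Smith identity
\[
\dim H_*(F)+2\sum_p\dim\coker(\tr^p+\inc_p)=\dim H_*(X),
\]
makes the latter condition equivalent to $\dim H_*(F)=\dim H_*(X)$, which is exactly the maximality of $(X,c)$. Chaining the two equivalences yields the corollary.

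The argument presents no real obstacle: it is essentially a diagram chase around $H_k(X)$ in the Smith sequence, with the Smith identity providing the link to the numerical definition of maximality. The only point requiring a small amount of care is the index bookkeeping, to verify that the vanishing of $\pr_*$ at every degree $k$ is precisely what is extracted at each end of the four-term sequence as $k$ varies over $\NN$.
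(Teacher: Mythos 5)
Your proof is correct and follows exactly the route the paper intends: the corollary is stated as an immediate consequence of the Smith sequence of Theorem~\ref{Smith.seq} together with the Smith identity of Corollary~\ref{ineq}(1), and your diagram chase (exactness of all the four-term sequences $\Leftrightarrow$ vanishing of $\pr_*$ in all degrees $\Leftrightarrow$ surjectivity of $\tr^*+\inc_*$ in all degrees $\Leftrightarrow$ vanishing of all the cokernels in the Smith identity $\Leftrightarrow$ maximality) is precisely the argument the paper leaves implicit. The index bookkeeping you flag is handled correctly, since as $k$ ranges over $\NN$ the two ends of the four-term sequences jointly exhaust the vanishing of $\pr_*$ on $H_k(X)$ for all $k\ge 0$.
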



\subsection{Equivariant (co)homology and the Borel spectral sequence}


Let $BG=\RR\PP^{\infty}$ be the classifying space of a cyclic group of order $2$ denoted by $G,$ and 
$EG=S^{\infty}$ its universal cover.  Let $X_G := (X\times EG)/G$ be the Borel construction, where $G$ 
acts freely, as the diagonal action, and denote by $H^*_G(X,\FF_2) := H^*(X_G,\FF_2)$ the equivariant 
cohomology ring with $\FF_2$-coefficients. To the fiber sequence
$$
X \hookrightarrow X_G \ra BG
$$
we associate the Leray-Serre spectral sequence:
\begin{equation}
\label{leray-serre}
E^{pq}_2 = H^p(G, H^q(X,\FF_2)) \longrightarrow H^{p+q}_G(X, \FF_2).
\end{equation}
The maximality of $(X,c)$ can be reformulated into the surjectivity of the restriction map from the 
equivariant cohomology to the usual cohomology  (see, for example, 
\cite[Chapter III, Proposition 4.16]{tomDieck}):
\begin{prop}
\label{coh-max}
Let $(X,c)$ be a topological space equipped with a cellular involution. The following conditions are equivalent:
\begin{itemize}
\item[ 1)] $X$ is an $M$-variety.
\item[ 2)] The action of $G$ on $H^{*}(X,\FF_2)$ is trivial and the Leray-Serre 
spectral sequence (\ref{leray-serre}) degenerates
at the second page.
\item[ 3)] The restriction homomorphism $R:H^{*}_{G}(X,\FF_2)
\rightarrow H^{*}(X,\FF_2)$ is surjective.
\end{itemize}
\end{prop}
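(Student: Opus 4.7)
The plan is to prove $(2) \Leftrightarrow (3)$ and $(2) \Leftrightarrow (1)$, using the edge homomorphism and multiplicativity of the Leray--Serre spectral sequence together with the Borel--Quillen localization theorem. Both are classical tools in the equivariant $\FF_2$-cohomology of $G$-spaces with $G = \ZZ/2$.

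For $(2) \Leftrightarrow (3)$: the restriction $R$ coincides with the edge homomorphism
\[
H^*_G(X; \FF_2) \twoheadrightarrow E_\infty^{0, *} \hookrightarrow E_2^{0, *} = H^*(X; \FF_2)^G \subseteq H^*(X; \FF_2),
\]
so $R$ is surjective if and only if $E_\infty^{0, *} = H^*(X; \FF_2)$. This in turn forces both that the $G$-action on $H^*(X; \FF_2)$ is trivial (so $E_2^{0, *} = H^*(X; \FF_2)$) and that no differential affects the $p = 0$ column. Using multiplicativity of the spectral sequence together with the fact that the generator $t \in H^1(BG; \FF_2)$ is a permanent cycle (being pulled back from $H^*(BG; \FF_2)$), every class $t^p \cdot x$ with $x \in E_2^{0, q}$ is also a permanent cycle. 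Under a trivial action, $E_2^{p, q} = t^p \cdot H^q(X; \FF_2)$ is spanned by such products, yielding full degeneration at $E_2$. The converse implication is immediate from the same factorization of $R$.

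For $(2) \Leftrightarrow (1)$: apply the Borel--Quillen localization theorem. Setting $S = \{1, t, t^2, \ldots\} \subset H^*(BG; \FF_2) = \FF_2[t]$, one obtains
\[
S^{-1} H^*_G(X; \FF_2) \cong S^{-1} H^*_G(F; \FF_2) = H^*(F; \FF_2) \otimes \FF_2[t, t^{-1}],
\]
the second equality because $G$ acts trivially on $F$. This identifies the $\FF_2[t]$-rank of $H^*_G(X; \FF_2)$ with $\dim_{\FF_2} H^*(F; \FF_2)$. If $(2)$ holds then the spectral sequence yields $H^*_G(X; \FF_2) \cong H^*(BG; \FF_2) \otimes H^*(X; \FF_2)$ as $\FF_2[t]$-modules, free of rank $\dim H^*(X; \FF_2)$; combined with the above, this gives $(1)$. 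Conversely, one always has the Smith inequality $\dim H^*(F) \le \dim H^*(X)$, with equality forcing $H^*_G(X; \FF_2)$ to be $t$-torsion-free of rank $\dim H^*(X; \FF_2)$ over $\FF_2[t]$; tracking this through the spectral sequence yields both degeneration at $E_2$ and triviality of the action.

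The main obstacle is the careful bookkeeping in the second step: any nontrivial differential or nontrivial $G$-action on $H^*(X; \FF_2)$ produces $t$-torsion in $H^*_G(X; \FF_2)$, strictly lowering the $t$-free rank below $\dim H^*(X; \FF_2)$ and, by the localization computation, forcing a strict inequality $\dim H^*(F) < \dim H^*(X)$; conversely, once these two obstructions are absent, $H^*_G(X; \FF_2)$ becomes a free $\FF_2[t]$-module whose rank is simultaneously $\dim H^*(X; \FF_2)$ and $\dim H^*(F; \FF_2)$.
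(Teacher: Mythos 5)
The paper offers no proof of this proposition --- it simply cites \cite[Chapter III, Proposition 4.16]{tomDieck} --- so there is nothing internal to compare against; what you have written is the standard textbook argument, and it is essentially correct. The identification of $R$ with the edge homomorphism, plus multiplicativity (classes $t^{p}\in E_2^{p,0}$ are automatically permanent cycles, so $d_r(t^{p}x)=t^{p}d_r(x)$), correctly yields $(2)\Leftrightarrow(3)$; and the localization theorem pins the stable value of $\dim_{\FF_2}H^{n}_{G}(X,\FF_2)$ for $n\gg 0$ at $\dim H^{*}(F)$, while the spectral sequence bounds it above by $\dim H^{*}(X)$ with equality exactly when the action is trivial and all differentials vanish, giving $(1)\Leftrightarrow(2)$. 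Two points should be made explicit. First, the localization theorem (and indeed the statement itself) requires $X$ to be a finite-dimensional $G$-complex with finite total Betti number; this is automatic in the paper's setting but does not follow from ``topological space with a cellular involution'' alone, so state the hypothesis. Second, in $(2)\Rightarrow(1)$ degeneration identifies $H^{*}(BG)\otimes H^{*}(X)$ only with the associated graded of $H^{*}_{G}(X)$; rather than lifting $\FF_2[t]$-freeness from the associated graded (true here, but it needs a word), it is cleaner to compare $\dim H^{n}_{G}(X)=\sum_{p+q=n}\dim E_{\infty}^{p,q}$ for $n$ large directly with the two quantities above --- this also makes rigorous your ``bookkeeping'' remark that a nontrivial action or differential strictly decreases the stable dimension, since $\dim H^{p}(G;H^{q}(X))<\dim H^{q}(X)$ for $p\ge 1$ whenever the action on $H^{q}(X)$ is nontrivial. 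Note finally that the paper's Corollary \ref{maxSmith} gives an equivalent reformulation of $(1)$ via the Smith sequences, which is the more elementary route actually used elsewhere in the paper.
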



\section{Cut-and-Paste construction of Hilbert squares over the reals}
\label{cut-paste}


Let $X$ be a nonsingular projective surface. By definition, the Hilbert square 
of $X,$ denoted by $X^{[2]},$ is the Hilbert scheme parametrizing 
$0$-dimen-sional subschemes of $X$ of length $2.$ As it follows from 
the universal property of the Hilbert schemes, $X^{[2]}$ has an elementary 
description (see, e.g., \cite[Example 7.3.1]{FGA}), which we recall for 
the convenience of the reader.

\smallskip

Let $\Bl_\Delta (X\times X)$ be the blow-up of  $X\times X$ along the diagonal 
$\Delta\subset X\times X.$ The involution $\tau$ on $X\times X$ permuting the 
factors lifts to an involution $\Bl(\tau)$ on  $\Bl_\Delta (X\times X).$ The quotient 
of $\Bl_\Delta (X\times X)$ by $\Bl(\tau)$  is naturally isomorphic to the Hilbert 
square $X^{[2]}.$ The fixed locus of $\Bl(\tau)$ is  the exceptional divisor of the 
blow-up $\Bl_\Delta (X\times X).$ The smoothness of this divisor implies that $X^{[2]}$ is
nonsingular.

The  branch locus $E\subset X^{[2]}$ of the double ramified covering 
$\Bl_\Delta (X\times X)\to X^{[2]}$ is naturally isomorphic to
the exceptional divisor of the blowup, and, since the normal vector bundle of  
$\Delta$ in $X\times X$ is isomorphic to the tangent vector bundle $TX$ of $X,$ 
both this exceptional divisor and $E$ are naturally isomorphic to 
$\PP(T^*X)$\footnote{In agreement with \cite{square}, which we use as reference, 
we follow the Grothendieck convention according to which the projectivization of a 
vector space is the space of its hyperplanes.}. In other words, 
a point of $E$ is identified with a point of $X$ plus a complex line in the tangent 
space at that point. Pairs of distinct points in $X$ represent the points of the complement, 
$X^{[2]}\setminus E.$

Notice that the normal bundle of $E$, as a branch locus, is the square of the 
normal bundle of the ramification locus. Hence, it is  naturally isomorphic to the 
square of the tautological bundle over $\PP(T^*X)$. 

The blowing-up map $\Bl_\Delta (X\times X)\ra X\times X$ descends to a morphism,
\begin{equation}
\label{H-C}
\pi: X^{[2]}\ra X^{(2)},
\end{equation}
where $X^{(2)}=(X\times X)/\tau$ is the $2$-fold symmetric product. This morphism 
is called the Hilbert-Chow map. Notice that the map $\pi$ is one to one on $X^{[2]}\setminus E$ and maps 
$E$ to the diagonal $\Delta X$ of $X^{(2)}.$ 

\smallskip

The construction of the Hilbert square is independent on the choice of the ground field. Applying it to a real
nonsingular projective surface $(X,\Conj ),$ it equips $X^{[2]}$ with a natural real structure, 
still denoted by $\Conj$. It acts on points in $X^{[2]}$ represented by pairs of distinct points 
in $X$ by sending a pair to the complex conjugate one, while if a point of $X^{[2]}$ is 
represented by a point of $X$ with a tangent line, then it is sent to the conjugate point 
with the conjugate line.

Thus, $X^{[2]}(\RR)=X/\Conj$ if $X(\RR)=\emptyset,$  and otherwise $X^{[2]}(\RR)$ 
is a smooth manifold with $\dim_\RR X^{[2]}(\RR)=4,$ having the following properties:

\begin{itemize}
\item $E(\RR)\subset X^{[2]}(\RR)$ is a submanifold of real codimension 1 in $X(\RR).$ 
It is diffeomorphic to 
$$
\PP_\RR(T^*X(\RR))=\bigsqcup_{i=1}^{r}\PP_\RR(T^*F_i),
$$
where $F_1,\dots, F_r$ are the connected components of $X(\RR).$

\item $X^{[2]}(\RR)\setminus E(\RR)$ is the disjoint union of the connected real 
$4$-dimensional manifolds  
$$
\left((X/\Conj) \setminus X(\RR)\right)\sqcup  \bigsqcup_{i=1}^r 
\left( F_i^{(2)}\setminus \Delta F_i\right) \sqcup \bigsqcup_{1\leq i<j\leq r}F_i\times F_j,
$$
where $\Delta F_i$ stands for the diagonal in $F_i^{(2)}.$ Each of the manifolds 
$F_i\times F_j$ in this decomposition is closed, while the manifolds $(X/\Conj) \setminus X(\RR)$
 and  $F_i^{(2)}\setminus \Delta F_i,\, i=1,\dots, r$ are open.

\item  For each $1\le i\le r$, the component $\PP_\RR(T^*F_i)$ of $E(\RR)$ is a 
common boundary of $ (X/\Conj) \setminus X(\RR)$ with  $F_i^{(2)}\setminus \Delta F_i$.
\end{itemize}

From this we can conclude  that $X^{[2]}(\RR)$ has the following decomposition, 
as illustrated in the figure below:
\begin{figure}[!h]
\label{figure-decomposition}
\begin{tikzpicture}[line cap=round,line join=round,>=triangle 45,x=0.7cm,y=0.7cm]
\clip(-6.5,-3) rectangle (20,6.5);
\draw [shift={(0,0)},line width=2.8pt]  plot[domain=0:3.141592653589793,variable=\t]({1*6*cos(\t r)+0*6*sin(\t r)},{0*6*cos(\t r)+1*6*sin(\t r)});
\draw [shift={(-5,0)},line width=2.8pt]  plot[domain=3.141592653589793:6.283185307179586,variable=\t]
({1*1*cos(\t r)+0*1*sin(\t r)},{0*1*cos(\t r)+1*1*sin(\t r)});
\draw [shift={(-3,0)},line width=2.8pt]  plot[domain=0:3.141592653589793,variable=\t]
({1*1*cos(\t r)+0*1*sin(\t r)},{0*1*cos(\t r)+1*1*sin(\t r)});
\draw [shift={(-1,0)},line width=2.8pt]  plot[domain=3.141592653589793:6.283185307179586,variable=\t]
({1*1*cos(\t r)+0*1*sin(\t r)},{0*1*cos(\t r)+1*1*sin(\t r)});
\draw [shift={(5,0)},line width=2.8pt]  plot[domain=3.141592653589793:6.283185307179586,variable=\t]
({1*1*cos(\t r)+0*1*sin(\t r)},{0*1*cos(\t r)+1*1*sin(\t r)});
\draw 
[shift={(1.9900569143071205,-1.58813075258282)},line width=2pt,dash pattern=on 1pt off 9pt]  
plot[domain=1.1634565649835678:1.9626857498870878,variable=\t]
({1*2.561646051862725*cos(\t r)+0*2.561646051862725*sin(\t r)},{0*2.561646051862725*cos(\t r)+1*2.561646051862725*sin(\t r)});
\draw [shift={(1.9900569143071205,-1.58813075258282)},line width=2pt,dash pattern=on 1pt off 3pt]  
plot[domain=1.9626857498870878:2.4680523563968904,variable=\t]({1*2.5616460518627244*cos(\t r)+0*2.5616460518627244*sin(\t r)},
{0*2.5616460518627244*cos(\t r)+1*2.5616460518627244*sin(\t r)});
\draw [shift={(1.9900569143071205,-1.58813075258282)},line width=2pt,dash pattern=on 1pt off 3pt]  
plot[domain=0.6686980247446463:1.1634565649835678,variable=\t]
({1*2.561646051862725*cos(\t r)+0*2.561646051862725*sin(\t r)},{0*2.561646051862725*cos(\t r)+1*2.561646051862725*sin(\t r)});
\draw [rotate around={0:(11.407825508008441,0)},line width=2.8pt] (11.407825508008441,0) ellipse (1.7cm and 0.8cm);
\draw [line width=2pt,color=ffqqqq] (-6,0)-- (-4,0);
\draw [line width=2pt,color=ffqqqq] (-2,0)-- (0,0);
\draw [line width=2pt,color=ffqqqq] (4,0)-- (6,0);
\draw [line width=2pt,dash pattern=on 1pt off 5pt,color=ffqqqq] (1,0)-- (3,0);
\begin{scriptsize}
\draw (7,0) node {$\sqcup$};
\draw (8,-0.2) node {$\displaystyle \bigsqcup_{1\leq i<j\leq r}$};
\draw (0,3) node {$\HH_0$};
\draw (-5,-0.5) node {$\HH_1$};
\draw(-1,-0.5) node {$\HH_2$};
\draw (5,-0.5) node {$\HH_r$};
\draw[color=ffqqqq] (5,0.3) node {$\PP_\RR(T^*F_r)$};
\draw[color=ffqqqq] (-1,0.3) node {$\PP_\RR(T^*F_2)$};
\draw[color=ffqqqq] (-5,0.3) node {$\PP_\RR(T^*F_1)$};
\draw(11.5,0) node {$F_i\times F_j$};
\draw (8.3,1) node {$X^{[2]}(\RR)_{\rm extra}$};
\draw  (-5,5) node {$X^{[2]}(\RR)_{\rm main}$};
\end{scriptsize}
\end{tikzpicture}
\caption{The  decomposition of $X^{[2]}(\RR)$}
\end{figure}

\vspace*{-0.2in}

$$
 X^{[2]}(\RR)=X^{[2]}_{\text{main}}(\RR) \bigsqcup X^{[2]}_{\text{extra}}
 (\RR),\quad  X^{[2]}_{\text{extra}}(\RR) = \bigsqcup_{1\leq i<j\leq r} \left(F_i\times F_j\right),
$$
where $X^{[2]}_{\text{main}}(\RR)$ is the connected component of  $X^{[2]}(\RR)$ that 
contains $E(\RR)$ in such a way that $E(\RR)$ divides $X^{[2]}_{\text{main}}(\RR)$ in
 $r+1$ connected submanifolds with boundary,
$$
X^{[2]}_{\text{main}}(\RR)= \bigcup_{i=0}^{r} \HH_i, 
$$
such that
\begin{align*}
\partial \HH_{0} = E(\RR),& \quad  \interior \HH_{0}\cong (X/\Conj) \setminus X(\RR) \\
\partial \HH_{i}=\PP_\RR(T^*F_i),& \quad  \interior \HH_{i}\cong F_i^{(2)}\setminus \Delta F_i, i=1,\dots, r.
\end{align*}
Each manifold
$\HH_i,\,  i=1,\dots,r$ is glued to $\HH_0$ along their common boundary
$\PP_\RR( T^*F_i )\subseteq \PP_\RR(T^*X(\RR)).$

\smallskip

Notice that the Hilbert-Chow map (\ref{H-C}) is equivariant with respect to the 
induced real structures on
 $X^{[2]}$ and  $X^{(2)}$.

\begin{prop}
\label{inclusion} 
Let $X$ be a real smooth projective surface with $X(\RR)\neq\emptyset.$ Then:
\begin{itemize}
\item[ 1)] For each $k=1,\dots, r$ the inclusion homomorphisms 
$$
\mathfrak{i}_{*}^ {\HH_k}: H_1(\PP_\RR T^* F_k)\to H_1(\HH_k)
$$ 
vanish at $\xi\in H_1(\PP_\RR T^* F_k)$ if and only if $\xi\cap\omega=0,$ where $\omega$ 
is the first Stiefel-Whitney class of the tautological line bundle over $\PP_\RR T^* F_k$.
\item[ 2)] 
The inclusion homomorphism
$$
\mathfrak{i}_{*}^ {\HH_0}:  H_1(\bigsqcup_{k=1}^r \PP_\RR T^* F_k)\to H_1(\HH_0)
$$ 
is non-vanishing on each $\xi\in H_1(\PP_\RR T^* F_k)$ with $\xi\cap\omega\ne 0.$ 
In particular, it is not vanishing on each of the fiber-classes $w_k\in  H_1(\PP_\RR T^* F_k).$ 
\item[ 3)]
If $X$ is maximal, then the homomorphism $\mathfrak{i}_{*}^ {\HH_0}$ is surjective.
\end{itemize}
\end{prop}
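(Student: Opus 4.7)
The proof breaks into three parts, united by a shared local model and a characteristic-class detection.

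\emph{Local model.} By the cut-and-paste construction, near any $p\in F_k$ a neighborhood of $\partial \HH_k\subset \HH_k$ is homeomorphic to $\RR^2\times S^1\times[0,\infty)$, with the $\RR^2$-factor corresponding to the base direction in $F_k$, the $S^1$-factor to the fiber $\RR\PP^1$ of $\PP_\RR T^*F_k$, and $[0,\infty)$ to the ``real blowup coordinate''. The same local model governs a neighborhood of $\PP_\RR T^*F_k\subset \partial \HH_0$ inside $\HH_0$ (with the $\RR^2$-factor now arising from $\CC^2/\Conj$ at the real locus, after resolving the $\RR^2/\pm$ singularity). In either case the local fiber generates $H_1$ of the local neighborhood.

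\emph{Parts 1 and 2 via characteristic classes.} By Leray--Hirsch
\[
H^*(\PP_\RR T^*F_k;\FF_2) \;=\; H^*(F_k)[\omega]\big/\bigl(\omega^2+w_1(TF_k)\,\omega+w_2(TF_k)\bigr),
\]
so $H_1(\PP_\RR T^*F_k)=H_1(F_k)\oplus\FF_2\cdot w_k$, the pairing $\cap\omega$ vanishes on $H_1(F_k)$ and sends $w_k\mapsto 1$; hence $\{\xi:\xi\cap\omega=0\}=H_1(F_k)$. For non-vanishing of $\mathfrak{i}_*^{\HH_k}(w_k)$ and $\mathfrak{i}_*^{\HH_0}(w_k)$ I use the characteristic class $\nu\in H^1(\HH_k;\FF_2)$ (respectively $\nu'\in H^1(\HH_0;\FF_2)$) of the unramified part of the branched double cover $\Bl_\Delta(F_k\times F_k)\to \HH_k$ (respectively $\Bl_{X(\RR)}X\to\HH_0$), extended via the collar deformation retract $\HH_{\bullet}\simeq \HH_{\bullet}\setminus\partial \HH_{\bullet}$. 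In the local model these classes restrict to the generator of $H^1(S^1)$, so they pair non-trivially with any circle homologous to a fiber. For Part 1 I also need that $\mathfrak{i}_*^{\HH_k}$ kills horizontal classes: represent $\xi$ as a section $q\mapsto[v(q)]$ over a loop $\gamma\subset F_k$ (with $v$ a non-vanishing vector field along $\gamma$), and use the 2-chain
\[
\Sigma=\bigl\{\{q+tv(q),q-tv(q)\}:q\in\gamma,\ t\in[0,\epsilon]\bigr\}\subset\HH_k
\]
to cobound $\xi$ with an interior loop $\tilde\gamma$; then $\tilde\gamma$ is exhibited as bounding inside the interior $F_k^{(2)}\setminus\Delta F_k$ by interpolating it to a loop of the form $q\mapsto\{q,q_0\}$ (for a fixed $q_0\notin\gamma$) and contracting through the embedding $F_k\setminus\{q_0\}\hookrightarrow F_k^{(2)}\setminus\Delta F_k$.

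\emph{Part 3.} I apply Mayer--Vietoris to the cover $\bar X:=X/\Conj=U\cup V$, where $U$ is an open tubular neighborhood of $X(\RR)\subset \bar X$ (retracting to $X(\RR)$), $V=\bar X\setminus X(\RR)$ is homotopy equivalent to $\HH_0$ (via collar retraction of $\partial\HH_0$), and $U\cap V$ retracts to $\PP_\RR T^*X(\RR)$. The relevant degree-one portion reads
\[
H_1(\PP_\RR T^*X(\RR))\xrightarrow{(\pi_*,\,\mathfrak{i}_*^{\HH_0})} H_1(X(\RR))\oplus H_1(\HH_0)\xrightarrow{\beta} H_1(\bar X),
\]
with $\beta(a,b)=j_*(a)+p_*(b)$ for $j:X(\RR)\hookrightarrow\bar X$ and $p:\HH_0\to\bar X$ the blowdown. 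The essential input is that maximality of $X$ makes $j_*$ surjective: indeed the Smith short exact sequence of Corollary~\ref{maxSmith} at $k=0$,
\[
0\to H_1(\bar X,X(\RR))\xrightarrow{\Delta} H_0(\bar X,X(\RR))\oplus H_0(X(\RR))\to H_0(X)\to 0,
\]
forces $\Delta$ to be injective, and since $H_0(\bar X,X(\RR))=0$ (for $X$ connected with $X(\RR)\ne\emptyset$) the pair-boundary $\partial:H_1(\bar X,X(\RR))\to H_0(X(\RR))$ is injective, so the long exact sequence of the pair $(\bar X,X(\RR))$ delivers surjectivity of $j_*$. A diagram chase through Mayer--Vietoris then lifts every $b\in H_1(\HH_0)$: choose $a$ with $j_*(a)=p_*(b)$, so $(a,b)\in\ker\beta$, and hence $(a,b)=(\pi_*(\xi),\mathfrak{i}_*^{\HH_0}(\xi))$ for some $\xi$. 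The main obstacle is the clean application of Mayer--Vietoris to the cover of the (only topological) 4-manifold $\bar X$ by $U$ and $V$, and the identification of the Mayer--Vietoris maps with $\pi_*,\mathfrak{i}_*^{\HH_0},j_*,p_*$ as claimed.
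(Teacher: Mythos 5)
Your treatment of part 2, of the ``only if'' half of part 1, and of part 3 matches the paper's: in all three you detect classes with the characteristic class $\omega$ of the relevant double cover, and for part 3 you reduce to the surjectivity of $H_1(X(\RR))\to H_1(X/\Conj)$, which you extract (correctly, and exactly as the paper does) from the $k=0$ Smith short exact sequence; your explicit Mayer--Vietoris chase just fills in a step the paper leaves implicit.

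The ``if'' half of part 1, however, contains a genuine error. Your interior loop is $\tilde\gamma(q)=\{q+\epsilon v(q),\,q-\epsilon v(q)\}$, in which \emph{both} members of the unordered pair trace out a copy of $\gamma$. Under the quotient $\pr:F_k\times F_k\to F_k^{(2)}$ its class is
$$
\pr_*\bigl([\gamma]\times 1\bigr)+\pr_*\bigl(1\times[\gamma]\bigr)=2\,\pr_*\bigl([\gamma]\times 1\bigr)=0\in H_1(F_k^{(2)};\FF_2),
$$
whereas the class of $q\mapsto\{q,q_0\}$ is $\pr_*([\gamma]\times 1)$, which is \emph{nonzero} whenever $[\gamma]\neq 0$, since $H_1(F_k)\to H_1(F_k^{(2)})$, $x\mapsto \pr_*(x\times 1)$, is an isomorphism (Macdonald/Kallel--Salvatore, as used later in the proof of Lemma 5.2). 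So for $[\gamma]\neq 0$ there is no interpolation from $\tilde\gamma$ to $q\mapsto\{q,q_0\}$ even in $F_k^{(2)}$ (let alone in the complement of the diagonal), and the latter loop does not contract through $F_k\setminus\{q_0\}\hookrightarrow F_k^{(2)}\setminus\Delta F_k$ either, because that embedding is injective on $H_1$. Your mechanism would only work for $[\gamma]=0$; the nontrivial horizontal classes are exactly the ones it misses.

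The correct mechanism is the mod-$2$ cancellation of the \emph{two} branches: $\tilde\gamma$ is homologous to $\{a(q),q_0\}+\{b(q),q_0\}$ with $a\simeq b\simeq\gamma$, hence to twice a class, hence to zero. To carry this out in $F_k^{(2)}\setminus\Delta F_k$ rather than in $F_k^{(2)}$ you must also control the kernel of $H_1(F_k\times F_k\setminus\Delta F_k)\to H_1(F_k\times F_k)$, which is generated by the tubular circle $w$ around the diagonal; that circle also dies under $\pr_*$ because it double-covers the fiber of $\PP_\RR T^*F_k$. This is precisely how the paper argues: the lift $\tilde\Gamma$ is written as $y+\tau_* y+\epsilon w$ in $H_1(F_k\times F_k\setminus\Delta F_k)$, and $\pr_*$ annihilates $y+\tau_*y$ and $w$. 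I recommend replacing your interpolation step by this argument.
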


\begin{proof}
The given definition of $\omega$ is equivalent to saying that it is the characteristic 
class of the tautological double covering $UT^* F_k \to \PP_\RR T^* F_k$
(where $UT^*$ stands for the unit cotangent bundle). By the latter reason, we may 
also interpret $\omega$ as a restriction of the characteristic class $v_k$ of the double 
covering $(F_k\times F_k)\setminus \Delta F_k\to (F_k)^{(2)}\setminus \Delta F_k,$ 
or as a restriction of the characteristic class $v$ of the double covering 
$X\setminus X(\RR) \to (X/\Conj)\setminus X(\RR)$.
For the former one, it is sufficient to identify $UT^* F_k$ and  $\PP_\RR T^* F_k$ with the 
boundary of a tubular neighborhood of $\Delta F_k$ in $F_k\times F_k$ and $(F_k)^{(2)}$, 
respectively. Similarly, when it is a question of $X\setminus X(\RR) \to (X/\Conj)\setminus X(\RR)$,
we identify $\cup_k UT^* F_k$ and  $\cup_k \PP_\RR T^* F_k$ with the boundary of a tubular 
neighborhood of $\cup F_k$  in $X$ and $X/\Conj,$ respectively. For more details on extending the 
class $\omega$ to cohomology classes in $\HH_0$ and $\HH_i,\, i=1,\dots,r,$ we refer the 
interested reader to \cite[Section 3.1]{kr-jtop}.

Therefore, for each  
$\xi\in H^1(\HH_k)$ with $\xi\cap\omega=1,$ by the projection formula, we have
$$
\mathfrak{i}_*^{\HH_k}(\xi)\cap v_k=\xi\cap (\mathfrak{i}^{\HH_k})^*(v_k)=\xi\cap\omega=1.
$$
This implies the ``only if part" of the first statement and the second statement.

To prove the ``if part" of the first statement,  let $k\ge 1$ and consider a loop 
$\Gamma\subset \PP_\RR T^* F_k$ representing a class $\xi$ with $\xi\cap\,\omega=0$.
Such a loop lifts to a loop $\tilde \Gamma$ on the boundary of a tubular neighborhood of $\Delta F_k$ 
in $ F_k\times F_k$. Therefore, the homology class $[\tilde \Gamma] \in H_1(F_k\times F_k)$ belongs 
to the image of $H_1(\Delta F_k)$.  But, according to the K\"unneth formula and Poincar\'e duality, 
the class $[\tilde \Gamma],$ as every class in the image of $H_1(\Delta F_k),$ is of the form 
$\sum (x_j\otimes 1 + 1\otimes x_j),$ for some $x_j\in H_1(F_k).$ Since, in addition, the inclusion map 
$$
\inc^{\complement\Delta F_k}_*:H_1(F_k\times F_k\setminus \Delta F_k)\to H_1(F_k\times F_k)
$$ 
is an epimorphism and its kernel is generated by tubular circles $w$ around $\Delta F_k$, we may 
write the homology class $\tilde \xi$ realized in $H_1(F_k\times F_k\setminus \Delta F_k)$ 
by $\tilde \Gamma$ as $x+ \tau_* x + \epsilon w$, $\epsilon= 0,1$, for some
$x\in H_1(F_k\times F_k\setminus \Delta F_k)$ with $\tau$ being the deck transformation of 
our covering, which in the case under consideration is just transposition of the factors. 
Now, we are done:
\begin{align*}
\mathfrak{i}_*^{\HH_k}(\xi)= &\, (\pr_*\circ \inc^{\complement\Delta F_k}_*)(\tilde \Gamma)\\ 
= &\, \pr_*(x+\tau_* x +\epsilon w)\\
= &\, \pr_*(x)+\pr_*(\tau_* x)+ \epsilon \pr_*(w)\\
= &\, 0,
\end{align*}
where $\pr: F_k\times F_k\to F_k^{(2)}$ is the projection.

\smallskip

To prove the third statement, note, first, that $X(\RR)$ is a codimension 2 smooth submanifold of 
$X/\Conj,$ while $\bigsqcup_{k=1}^r \PP_\RR T^* F_k$ is the boundary of a tubular neighborhood of 
$X(\RR)$ in $X/\Conj$. Therefore, the surjectivity of $\mathfrak{i}_{*}^ {\HH_0}$ follows from the 
surjectivity of the inclusion homomorphism $H_1(X(\RR))\to H_1(X/\Conj ),$ which it in its turn follows 
from the exactness of the sequence  
$$
0\to H_1(X/\Conj, X(\RR))\to H_0(X(\RR))\to H_0(X)\to 0
$$ 
that holds due to maximality of $X$ (see Corollary \ref{maxSmith}).
\end{proof}

\begin{cor}
\label{inclusion-kernels}
For each $k=1,\dots, r,$ we have 
$\ker \mathfrak{i}_{*}^ {\HH_0}\subseteq \ker \mathfrak{i}_{*}^ {\HH_k}.$ 
\end{cor}

\proof
The inclusion $\ker \mathfrak{i}_{*}^ {\HH_0}\subseteq \ker \mathfrak{i}_{*}^ {\HH_k}$ 
follows from the first part of the second statement and the "if part" of the first statement 
of Proposition \ref{inclusion}.
\qed

\medskip

We collect next a few results which will be used several times in the next sections. For the convenience 
of the reader,  we indicate the main ideas of the proof.

\begin{prop}
\label{totaro-calc}
Let $X$ be a smooth compact complex surface. In an abbreviated notation $\beta_*=\beta_*(X)$ and 
$\beta_1=\beta_1(X),$ we have:
\begin{itemize}
\item[ 1)] The relation
\begin{equation}
\label{M-chi}
\chi(X^{[2]}(\RR))=\frac12\beta_*-2\beta_1+\frac12\chi(X(\RR))^2-\chi(X(\RR)).
\end{equation}
\item[ 2)] If $\Tors_2 H_*(X;\ZZ)=0,$ 
the relation
\begin{equation}
\label{totaro-notorsion}
\beta_*(X^{[2]}) = \frac12 \beta_*(\beta_*+1)+\beta_*- 2\beta_1.
\end{equation}
\item[ 3)] If $\Tors_2 H_*(X;\ZZ)\neq 0,$ the relation 
\begin{equation}
\label{totaro-torsion}
\beta_*(X^{[2]}) \geq \frac12 \beta_*(\beta_*+1)+\beta_*- 2\beta_1.
\end{equation}
\end{itemize}
\end{prop}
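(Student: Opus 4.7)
The plan is to prove (1) by an additive Euler-characteristic argument on the cut-and-paste description of Section~\ref{cut-paste}, and to prove (2)--(3) by combining the blowup realization $X^{[2]} = \Bl_{\Delta X}(X^{(2)})$ with Macdonald's formula for the Poincar\'e polynomial of a symmetric square, upgraded to $\FF_2$-coefficients via the structural results of \cite{square}.

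For (1), the key observation is that the pairwise overlaps in the decomposition $X^{[2]}(\RR) = \bigcup_{i=0}^{r}\HH_i \sqcup \bigsqcup_{i<j}F_i\times F_j$ of Section~\ref{cut-paste} are the $3$-manifolds $\HH_0\cap \HH_i = \PP_\RR T^*F_i$, whose Euler characteristic vanishes. Additivity then reduces the computation to
$$\chi(X^{[2]}(\RR)) = \chi(\HH_0) + \sum_{i=1}^{r}\chi(\HH_i) + \sum_{i<j}\chi(F_i)\chi(F_j).$$
Since $\HH_0$ is homotopy equivalent to $(X/\Conj)\setminus X(\RR)$, combining this with $\chi(X/\Conj) = (\chi(X)+\chi(X(\RR)))/2$ from the branched cover $X\to X/\Conj$ yields $\chi(\HH_0) = (\chi(X)-\chi(X(\RR)))/2$; similarly $\HH_i\simeq F_i^{(2)}\setminus\Delta F_i$ together with $\chi(F_i^{(2)}) = \binom{\chi(F_i)+1}{2}$ yields $\chi(\HH_i) = (\chi(F_i)^2-\chi(F_i))/2$. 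The $\sum_i\chi(F_i)^2$ contributions cancel between the two sums, leaving $\chi(X^{[2]}(\RR)) = \chi(X)/2 - \chi(X(\RR)) + \chi(X(\RR))^2/2$; then Poincar\'e duality for the closed oriented $4$-manifold $X$ ($\beta_0=\beta_4=1$, $\beta_1=\beta_3$) gives $\chi(X) = \beta_* - 4\beta_1$, producing (\ref{M-chi}).

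For (2) and (3), the starting point is the blowup formula $\beta_*(X^{[2]}) = \beta_*(X^{(2)}) + \beta_*(X)$ (valid over any coefficient field since $\Delta X\subset X^{(2)}$ has complex codimension $2$), reducing everything to the computation of $\beta_*(X^{(2)})$ in $\FF_2$-coefficients. Macdonald's formula over $\QQ$ specializes at $t=1$ to $b_*(X^{(2)}) = (b_*(X)^2 + \chi(X))/2$, which via $\chi(X) = b_* - 4b_1$ rewrites as $\binom{b_*+1}{2} - 2b_1$. When $H_*(X,\ZZ)$ has no $2$-torsion, the $\FF_2$-Betti numbers of $X$ and $X^{(2)}$ coincide with their rational counterparts, and formula (\ref{totaro-notorsion}) follows from the blowup identity. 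For (3), $2$-torsion in $H_*(X,\ZZ)$ inflates $\beta_*(X,\FF_2)$ and can only add further $\FF_2$-classes to $H_*(X^{(2)},\FF_2)$, yielding the inequality (\ref{totaro-torsion}); the precise accounting is supplied by the Hilbert square $\FF_2$-Betti number formula of \cite{square}. The main obstacle, common to both cases, is controlling $\beta_*(X^{(2)})$ in characteristic $2$: because $|\ZZ/2|$ divides the characteristic of the coefficient field, $H^*(X^{(2)},\FF_2)$ does not coincide with the $\ZZ/2$-invariants of $H^*(X\times X,\FF_2)$, so the contribution of the singular diagonal must be tracked with care in the Smith sequence---which is exactly what is handled explicitly in \cite{square}, and where the gap between equality (\ref{totaro-notorsion}) and inequality (\ref{totaro-torsion}) originates.
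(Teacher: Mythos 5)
Your proof of part 1) is correct, and it takes a genuinely different route from the paper's: the paper simply quotes the Euler--characteristic formula for real Hilbert schemes of points from \cite{kr}, whereas you rederive the case $n=2$ by additivity of $\chi$ over the cut-and-paste decomposition of Section \ref{cut-paste}, using that the closed $3$-manifolds $\PP_\RR T^*F_i$ have vanishing Euler characteristic together with $\chi(X/\Conj)=\tfrac12\bigl(\chi(X)+\chi(X(\RR))\bigr)$ and $\chi(F^{(2)})=\binom{\chi(F)+1}{2}$. The bookkeeping checks out, including the case $X(\RR)=\emptyset$, and this makes part 1) self-contained; what you give up is only that the cited formula covers all $X^{[n]}$ at once.

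Parts 2) and 3) contain a genuine gap. You route the computation through the symmetric square via two claims: (i) $\beta_*(X^{[2]})=\beta_*(X^{(2)})+\beta_*(X)$ over any coefficient field ``since $\Delta X\subset X^{(2)}$ has codimension $2$'', and (ii) if $H_*(X;\ZZ)$ has no $2$-torsion then the $\FF_2$-Betti numbers of $X^{(2)}$ equal its rational ones. Neither is justified, and they cannot both hold. For (i), $X^{(2)}$ is singular exactly along $\Delta X$, so the smooth-blowup formula does not apply; over $\QQ$ the identity is true (it is the invariant-part computation, or the decomposition theorem for the semismall Hilbert--Chow map), but over $\FF_2$ it is precisely the delicate point. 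For (ii), symmetric squares typically acquire $2$-torsion even when $X$ has none --- the link of the diagonal stratum is an $\RR\PP^3$-bundle over $X$, and for instance $H_6(\Sym^2 S^4;\ZZ)=\ZZ/2\ZZ$ --- which is exactly why Totaro's theorem concerns $X^{[2]}$ rather than $X^{(2)}$. Moreover, since the true value of $\beta_*(X^{[2]})$ in case 2) equals $b_*(X^{(2)};\QQ)+b_*(X;\QQ)$, any excess of $\beta_*(X^{(2)};\FF_2)$ over $b_*(X^{(2)};\QQ)$ forces your $\FF_2$-blowup formula (i) to fail as well; the two errors would have to cancel. The paper's argument avoids $X^{(2)}$ altogether: one identifies $H_*(X^{[2]};\QQ)$ with the $\Bl(\tau)$-invariants of $H_*(\Bl_\Delta(X\times X);\QQ)$ (legitimate over $\QQ$ by transfer, and using the smooth blowup formula for $\Bl_\Delta(X\times X)\to X\times X$), which yields (\ref{cx-relation}); one then passes to $\FF_2$ via Totaro's theorem that $\Tors_2 H_*(X^{[2]};\ZZ)=0$ whenever $\Tors_2 H_*(X;\ZZ)=0$, giving 2) directly and 3) from the finer accounting of the torsion contributions carried out in \cite{square}. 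You should replace your two claims by this argument.
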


\begin{proof}
The first item follows, for example, from the more general formula computing the Euler characteristic of the 
Hilbert scheme of points of surfaces defined over the reals \cite[page 5452]{kr}, and the observation that 
$\chi(X)=\beta_*(X)-4\beta_1(X).$

The other two items are implicitly contained in the proof of  Theorem 2.2 in \cite{square}.
According to \cite[Theorem 2.2]{square}, $\Tors_2 H_*(X^{[2]}; \ZZ)=0$ as soon as $\Tors_2 H_*(X;\ZZ)=0$.
Therefore, both parts of the statement follow from the following relation for the ordinary Betti numbers
\begin{equation}
\label{cx-relation}
\dim \, H_*(X^{[2]}; \QQ) = \frac12 b_*(b_*+1)+b_*- 2b_1
\end{equation}
where 
$$
b_i=\dim \, H_i(X;\QQ), \quad b_*=\sum b_i.
$$
This relation follows from the presentation of $X^{[2]}$ as the quotient of the blowup 
$\Bl_\Delta (X\times X)$ of $ X\times X$ due to the following arguments:
\begin{enumerate}
\item $H_*(X^{[2]};\QQ)$ is canonically (by pull-back) isomorphic to the 
$\Bl(\tau)$  invariant subspace of $H_*(\Bl_\Delta (X\times X);\QQ)$.
\item $H_*(\Bl_\Delta (X\times X);\QQ)$ splits canonically as 
$$H_*(\Bl_\Delta (X\times X);\QQ)=H_*(X\times X;\QQ)\oplus H_*(\Delta(X\times X); \QQ).$$
\item $\Bl(\tau)$ acts identically on the second summand, while on the first summand it maps 
the basic elements $v_i\otimes v_j$, $v_i\in H_i(X\times X;\QQ), v_j\in H_j(X\times X;\QQ)$
to $(-1)^{ij} v_i\otimes v_j$.
\end{enumerate}
\end{proof}
\begin{rmk}
{\rm 
A different proof of the relation (\ref{cx-relation}) follows 
from a simple inspection of G\"ottsche's formula \cite{gottsche} for the 
Betti numbers of the Hilbert scheme of points on smooth projective surfaces.
}
\end{rmk}


\section{Proof of Theorem \ref{converse}}


As a first step, we show the following:

\begin{prop}
\label{nonemptyness}
Let $X$ be a real smooth projective surface. If $X^{[2]}$ is maximal, then $X(\RR)\neq\emptyset.$
\end{prop}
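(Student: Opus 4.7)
The plan is to argue by contradiction: assuming $X(\RR)=\emptyset$ and that $X^{[2]}$ is maximal, I will derive a numerical inequality that fails. The key is that $X^{[2]}(\RR)$ then has $\FF_2$-Betti total linear in $\beta_*(X)$, while $\beta_*(X^{[2]})$ grows quadratically by Proposition~\ref{totaro-calc}.

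The first step is the identification $X^{[2]}(\RR)=X/\Conj$, which is immediate from the cut-and-paste description of Section~\ref{cut-paste}: with $X(\RR)=\emptyset$ there are no ``extra'' components $F_i\times F_j$, and $E(\RR)=\emptyset$, so only the main component survives and it coincides with the free quotient. In particular, $X/\Conj$ is a smooth closed $4$-manifold and $X\to X/\Conj$ is an unramified double cover, hence $\chi(X/\Conj)=\chi(X)/2$.

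The next step is an upper bound on $\beta_*(X/\Conj)$. By mod-$2$ Poincar\'e duality on the $4$-manifold $X/\Conj$, one has $\beta_*(X/\Conj)=\chi(X/\Conj)+4\beta_1(X/\Conj)$. The Gysin sequence of the $\ZZ/2$-covering $\pi\colon X\to X/\Conj$,
$$H^0(X/\Conj)\xrightarrow{\cup w} H^1(X/\Conj)\xrightarrow{\pi^*} H^1(X),$$
gives $\beta_1(X/\Conj)\leq 1+\beta_1(X)$, since $\ker\pi^*|_{H^1}$ is the image of $\cup w$ on a one-dimensional space. Combining with $\chi(X)=\beta_*(X)-4\beta_1(X)$ (Poincar\'e duality on the surface $X$) yields
$$\beta_*(X/\Conj)\leq \tfrac12\beta_*(X)+2\beta_1(X)+4.$$

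Finally, I would pair this with the bound $\beta_*(X^{[2]})\geq \tfrac12\beta_*(X)(\beta_*(X)+1)+\beta_*(X)-2\beta_1(X)$ of Proposition~\ref{totaro-calc}. Writing $s=\beta_*(X)$ and using $\beta_2(X)\geq 1$ (hyperplane class) to get $\beta_1(X)\leq (s-3)/2$, a short algebraic manipulation produces
$$\beta_*(X^{[2]})-\beta_*(X^{[2]}(\RR))\geq \tfrac12\bigl((s-1)^2+3\bigr)>0,$$
contradicting maximality. The main (rather mild) obstacle is simply to verify the Gysin estimate on $\beta_1(X/\Conj)$; thereafter the quadratic-vs-linear scaling makes the contradiction automatic.
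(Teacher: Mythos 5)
Your proposal is correct and follows essentially the same route as the paper: identify $X^{[2]}(\RR)$ with the free quotient $X/\Conj$, bound $\beta_1(X/\Conj)\le 1+\beta_1(X)$ (the paper uses the Smith sequence where you use the Gysin sequence of the double cover, which amounts to the same thing for a free involution), combine with Poincar\'e duality and $\chi(X/\Conj)=\chi(X)/2$ to get the linear upper bound $\frac12\beta_*+2\beta_1+4$, and contrast with the quadratic lower bound of Proposition~\ref{totaro-calc}. The final algebra, including the use of $\beta_2(X)\ge 1$, checks out.
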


\begin{proof} 
By contradiction,  let assume that $X^{[2]}$ is maximal and $X(\RR)=\emptyset$. Then $X^{[2]}(\RR)$ is 
the quotient smooth $4$-manifold $X/\Conj,$ and so 
$$
\beta_*(X^{[2]}(\RR))=\beta_*(X/\Conj).
$$
Since $X$ and $X/\Conj$ are connected, by applying the homology Smith sequence we find
$$
\cdots \ra H_1(X)\ra H_1(X/\Conj)\ra H_0(X/\Conj) =\FF_2,
$$
and so $\beta_1(X/\Conj)\leq \beta_1+1$ where $\beta_i$ states for $\beta_i(X).$ From Poincar\'e 
duality and the Riemann-Hurwitz formula we find 
\begin{align*}
\beta_*(X/\Conj)=\, &\,\chi(X/\Conj)+4\beta_1(X/\Conj)\\
\leq &\, \frac12\, \chi(X)+4\beta_1+4\\
= &\,\frac12\,\beta_*+2\beta_1+4.
\end{align*}
Thus,
$$
\beta_*(X^{[2]}(\RR))= \beta_*(X/\Conj)\leq \frac12\beta_*+2\beta_1+4=5+3\beta_1+\frac12\beta_2.
$$
Using the estimate (\ref{totaro-torsion}) in Proposition \ref{totaro-calc}, we notice now that 
\begin{align*}
\beta_*(X^{[2]})\geq &\, \frac12 \beta_*(\beta_*+1)+\beta_*- 2\beta_1\\
=\,&\,\frac12(2+2\beta_1+\beta_2)^2+ 3+ \beta_1+\frac32\beta_2\\
>\,&\,5+3\beta_1+\frac12\beta_2\\
\ge \,&\,\beta_*(X^{[2]}(\RR)),
\end{align*}
contradicting the maximality of $X^{[2]}.$ 
\end{proof}

\begin{proof}[Proof of Theorem \ref{converse}]
Pick a point $p\in X(\RR),$ whose existence is ensured by Proposition \ref{nonemptyness}, 
and consider the map $f:X\ra X^{(2)}$ given by 
$$
f(x)=\{p,x\}.
$$

Since the Hilbert-Chow map $\pi:X^{[2]}\ra X^{(2)}$ is an isomorphism when restricted to 
$X^{[2]}\setminus E$ and $f(X\setminus\{p\})\cap \pi(E)=\emptyset,$ the restriction of $f$ to $X\setminus\{p\}$ 
induces a map 
$$
\phi:X\setminus \{p\}\rightarrow X^{[2]}.
$$
The map $\phi$ extends to the blowup $\Bl_p(X)$ of $X$ at the point 
$p,$ and so we have a commutative diagram
$$
\xymatrix{\Bl_p(X)\ar[r]^\phi\ar[d]_{\pr}&X^{[2]\ar[d]^{\pi}}\,\\
 X
 \ar[r]_f &X^{(2)}.}
$$

\begin{lem}\label{surjectivity}
The map $\phi^*:H^*(X^{[2]})\ra H^*(\Bl_p(X))$ is surjective.
\end{lem}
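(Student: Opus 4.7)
The plan is to factor $\phi$ through the cut-and-paste model of Section~\ref{cut-paste} and to exhibit explicit preimages in $H^*(X^{[2]};\FF_2)$ of the generators of $H^*(\Bl_p X;\FF_2)$. Writing $q:\Bl_\Delta(X\times X)\to X^{[2]}$ for the quotient by $\Bl(\tau)$, the inclusion $\{p\}\times X\hookrightarrow X\times X$ meets $\Delta$ transversely at $(p,p)$, so its proper transform in $\Bl_\Delta(X\times X)$ is canonically $\Bl_p X$; denoting the resulting inclusion by $j$, one has $\phi=q\circ j$. By the blowup formula,
\[
H^*(\Bl_p X;\FF_2)=\pr^* H^*(X;\FF_2)\oplus\FF_2\cdot[E_p],
\]
so it suffices to cover the generators $\pr^* c$ for $c\in H^*(X;\FF_2)$ and the exceptional class $[E_p]$.

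For each $c\in H^*(X;\FF_2)$ I would form the tautological class $\tilde c:=(p_1)_*(p_2^* c)\in H^*(X^{[2]};\FF_2)$ attached to the universal length-$2$ subscheme $\Xi\subset X^{[2]}\times X$, where $p_1$ is finite flat of degree~$2$ and $p_2$ is the projection to $X$. Pulling $\Xi$ back to $\Bl_\Delta(X\times X)$, the cover splits generically into the two graph components of the two tautological projections to $X$, yielding $q^*\tilde c=\pr_1^* c+\pr_2^* c$ on $\Bl_\Delta(X\times X)$. Applying $j^*$ and using the K\"unneth restriction along the transverse inclusion $\iota:\{p\}\times X\hookrightarrow X\times X$, this becomes $c(p)\cdot 1+\pr^* c$, which in $\FF_2$-coefficients equals $\pr^* c$ for $\deg c>0$ (the class $1\in H^0(\Bl_p X)$ is supplied by $1\in H^0(X^{[2]})$ directly, since $\tilde 1=0$ in characteristic~$2$). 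For $[E_p]$ I would use the ``half-exceptional'' class $\delta\in H^2(X^{[2]};\FF_2)$ coming from the line bundle $L$ whose square is $\OO_{X^{[2]}}(E)$, which exists because $E$ is the branch divisor of the double cover $q$, so that $q^* E=2E_\Delta$; by construction $q^*\delta=[E_\Delta]$, and since the blowup square is cartesian along the transverse inclusion $\iota$, one has $j^*[E_\Delta]=[E_p]$, hence $\phi^*\delta=[E_p]$. Note that $[E]\in H^2(X^{[2]};\FF_2)$ itself equals $2\delta=0$ and is useless here.

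The main delicate point is the $\FF_2$-computation of $\phi^*\tilde c$ near the ramification: the tautological cover $p_1$ is branched over $E$, and one must verify that no extra correction supported on $E_p$ disrupts the identification with $\pr^* c$. This is handled by restricting to the open complement $\Bl_p X\setminus E_p$, where $\phi^*\Xi$ is an honest trivial double cover (one sheet constantly equal to $p$, the other mapped by $\pr$ to $X$), so that the formula $\phi^*\tilde c|_{\Bl_p X\setminus E_p}=\pr^* c|_{\Bl_p X\setminus E_p}$ is direct from the splitting; any residual $\FF_2\cdot[E_p]$ discrepancy is harmless because $[E_p]$ has already been placed in the image of $\phi^*$ via $\delta$. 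Combining these two constructions shows that $\phi^*$ surjects onto all generators of $H^*(\Bl_p X;\FF_2)$.
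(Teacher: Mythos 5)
Your proof is correct, but it reaches the goal by a genuinely different route than the paper for the ``main'' part of the cohomology. Both arguments start from the same splitting $H^*(\Bl_p X;\FF_2)=\pr^*H^*(X;\FF_2)\oplus\FF_2\cdot[E_p]$. For the first summand the paper does not compute anything on $X^{[2]}$ itself: it factors $\phi$ through the Hilbert--Chow morphism $\pi:X^{[2]}\to X^{(2)}$ and quotes the surjectivity of $f^*:H^*(X^{(2)})\to H^*(X)$ for $f(x)=\{p,x\}$ (a special case of Franz's result on symmetric products), so $\pr^*H^*(X)=\im(\pr^*f^*)\subset\im(\phi^*\pi^*)\subset\im\phi^*$. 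You instead produce explicit preimages, the tautological classes $\tilde c=(p_1)_*p_2^*c$; this works, and is in fact cleaner than your cautionary remarks suggest, since $\Xi\cong\Bl_\Delta(X\times X)$ with $p_1=q$, whence $q^*\tilde c=q^*q_*(\,\cdot\,)=(1+\Bl(\tau)^*)(\,\cdot\,)$ gives $\pr_1^*c+\pr_2^*c$ with no correction along $E_\Delta$. Even without that identity, your fallback is sound, with one small addendum: the discrepancy $\phi^*\tilde c-\pr^*c$, being supported on $E_p$, lies in the span of $[E_p]$ \emph{and} of $[E_p]^2$ (the point class, relevant in degree $4$), so you should note that both are already in $\im\phi^*$ via $\delta$ and $\delta^2$. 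For the second summand the two arguments essentially coincide: the paper invokes a class $e\in H^2(X^{[2]};\ZZ)$ restricting to $c_1(\theta)$ on $E=\PP(T^*X)$ (citing Totaro), while you construct it as $c_1(L)$ with $L^{\otimes 2}=\OO_{X^{[2]}}(E)$; these are the same class, and your verification $\phi^*\delta=j^*[E_\Delta]=[E_p]$ via transversality of $\{p\}\times X$ with $\Delta$ is correct. What the paper's route buys is brevity and a reference that works uniformly for symmetric powers; what yours buys is a self-contained argument with explicit generators, which moreover makes clear how to proceed integrally and for higher $X^{[n]}$ via tautological bundles.
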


\begin{proof} Note, first, that $H^i(\Bl_p(X))=H^i(X)\oplus H^{i}(\PP^1)$ for any $i>0$.
Here, the first summand coincide with $\im\pr^*$. Since $f^*:H^*(X^{(2)})\ra H^*(X)$ is 
surjective (see, for example, \cite[Lemma 2.5]{franz} for a more general statement), 
this implies that the first summand is contained in $\im\phi^*$.

The second summand is nontrivial only for $i=2$. It is generated by $\FF_2$-reduction 
of the integer class $\inc^*c_1(\theta),$ where $\theta$ is the tautological line bundle over 
$\PP(T^*X)=E\subset X^{[2]}$ and $\inc: \PP^1\hookrightarrow E$ is the inclusion. Since 
$c_1(\theta)$ is the restriction of an appropriate element $e\in H^2(X^{[2]};\ZZ)$ 
(see, for example, \cite[page 4]{square}), the second summand is also contained $\im\phi^*$.
\end{proof}

To finish the proof of Theorem \ref{converse}, consider now the commutative diagram
$$
\xymatrix{&H_G^*(X^{[2]})\ar[r]\ar[d]_{R^{[2]}} &H_G^*(\Bl_p(X))\,\ar[d]^R\\
&H^*(X^{[2]})\ar[r]^{\phi^*}
&H^*(\Bl_p(X)).\\
}
$$
Since $X^{[2]}$ is maximal the  induced restriction map $R^{[2]}$ is onto, which together 
with the surjectivity of $\phi^*$ implies that the restriction map $R$ is onto, as well. Hence, 
by Proposition \ref{coh-max}, $X$ is maximal.
\end{proof}


\section{Proof of Theorem \ref{main} and Corollaries \ref{main-corollary}, \ref{rational}}


\subsection{Reduction of the proof to computation of $\beta_1(X^{[2]}(\RR))$}
\label{reduction}


As before, let $\beta_i,\, i=0,\dots,4$ denote the $\FF_2$ - Betti numbers of $X.$ Under assumption of 
Theorem \ref{main}, we have $\beta_1=\beta_3=0$, while $\beta_2$ coincides with the ordinary Betti 
number $b_2=\dim H_2(X;\QQ)$.  

\smallskip

According to Proposition \ref{totaro-calc}, we find 
\begin{align*}
\beta_*(X^{[2]})& = 5+3\beta_2+\frac12 \beta_2(\beta_2+1),\\
\chi(X^{[2]}(\RR))& = 1+\frac12 \beta_2+\frac12\chi(X(\RR))^2-\chi(X(\RR)).
\end{align*}

Since 
$\displaystyle \beta_1(X^{[2]}(\RR))=
\frac14\left[\beta_*(X^{[2]}(\RR))-\chi(X^{[2]}(\RR))\right],$ 
the above two relations imply that $X^{[2]}$ is maximal if and only if 
\begin{align}
\label{b1-max-raw}
\beta_1(X^{[2]}(\RR))= & \frac14\left[\beta_*(X^{[2]})-\chi(X^{[2]}(\RR))\right]\\ \notag
=& \frac14\left[4+3\beta_2+\frac12 \beta_2^2+\chi(X(\RR))-\frac12\chi^2(X(\RR))\right].
\end{align}
It is convenient to rewrite this expression in terms of the number of the connected components of $X(\RR),$ 
which will be denoted by $r.$ For that, we notice, first, that the maximality of $X(\RR)$ is equivalent to
$$
2r+\beta_1(X(\RR))=2+\beta_2, 
$$
which implies  $\chi(X(\RR))=2r-\beta_1(X(\RR))=4r-2-\beta_2.$  Then, an immediate computation transforms 
the criterium (\ref{b1-max-raw}) into 
\begin{equation}
\label{b1-max}
\beta_1(X^{[2]}(\RR))=3r-2r^2+r\beta_2.
\end{equation}


\subsection{Direct computation of $\beta_1(X^{[2]}(\RR))$}
\label{direct}


The cut-and-paste construction described in Section \ref{cut-paste} is used next to compute 
$\beta_1(X^{[2]}(\RR)).$ We follow the notations introduced therein.

\begin{lem}
\label{b1-extra}
$\displaystyle \beta_1(X^{[2]}_{\emph{extra}}(\RR))=r\beta_2-2r^2+4r-\beta_2-2.$
\end{lem}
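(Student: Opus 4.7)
The plan is a direct computation, with no conceptual obstacle — the lemma really is a bookkeeping assertion about the extra components of the fixed locus.

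First I would unpack the definition of the extra part recalled in Section~\ref{cut-paste}, namely
$$
X^{[2]}_{\text{extra}}(\RR) = \coprod_{1\le i<j\le r} F_i\times F_j,
$$
where $F_1,\dots,F_r$ are the connected components of $X(\RR)$. Since the union is disjoint, $\beta_1$ is additive over the pieces. On each piece the K\"unneth formula with $\FF_2$-coefficients, combined with the fact that each $F_i$ is connected (so $\beta_0(F_i)=1$), gives $\beta_1(F_i\times F_j)=\beta_1(F_i)+\beta_1(F_j)$. Summing over ordered pairs $i<j$, each index $k$ appears exactly $r-1$ times, so
$$
\beta_1(X^{[2]}_{\text{extra}}(\RR)) = (r-1)\,\beta_1(X(\RR)).
$$

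Next I would express $\beta_1(X(\RR))$ in terms of $r$ and $\beta_2$. Under the hypotheses of Theorem~\ref{main} the surface $X$ is maximal with $\beta_1=\beta_3=0$, so the total $\FF_2$-Betti number of $X(\CC)$ equals $2+\beta_2$, and the Smith equality reads $\beta_*(X(\RR))=2+\beta_2$. Each $F_i$ is a closed surface, hence by $\FF_2$-Poincar\'e duality $\beta_2(F_i)=\beta_0(F_i)=1$, giving $\beta_0(X(\RR))=\beta_2(X(\RR))=r$. Thus
$$
2r+\beta_1(X(\RR)) = 2+\beta_2, \qquad \beta_1(X(\RR)) = \beta_2+2-2r.
$$

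Substituting into the previous expression,
$$
\beta_1(X^{[2]}_{\text{extra}}(\RR)) = (r-1)(\beta_2+2-2r) = r\beta_2 - 2r^2 + 4r - \beta_2 - 2,
$$
which is exactly the claimed formula. The only "step" that requires a moment of care is verifying $\beta_2(X(\RR))=r$ (rather than $0$); this comes from using $\FF_2$-coefficients, under which closed surfaces, orientable or not, carry a fundamental class. No further input is needed.
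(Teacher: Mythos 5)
Your proof is correct and follows essentially the same route as the paper: both decompose $X^{[2]}_{\text{extra}}(\RR)$ as $\coprod_{i<j}F_i\times F_j$, apply the K\"unneth formula to get $\sum_{i<j}(\beta_1(F_i)+\beta_1(F_j))$, and close the computation using the maximality identity $\beta_*(X(\RR))=\beta_*(X)=2+\beta_2$. The only difference is cosmetic bookkeeping (you count that each index occurs $r-1$ times in the sum, while the paper symmetrizes the double sum), and your explicit remark that $\beta_2(F_i)=1$ over $\FF_2$ for possibly non-orientable components is a correct and welcome clarification.
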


\begin{proof}
Using the  K\"unneth formula, we get
\begin{align}
\label{extra-general}
\beta_1(X^{[2]}_{\text{extra}}(\RR))=&\sum_{i<j}(\beta_1(F_i)+\beta_1(F_j))\\ \notag
=&\sum_{i<j}(\beta_*(F_i)+\beta_*(F_j)-4)\\ \notag
=&\frac12 \sum_{i,j}(\beta_*(F_i)+\beta_*(F_j)-4) - \frac12 \sum_{i}(2 \beta_*(F_i) -4)\\ \notag
= &\,r\beta_*(X(\RR))-2r^2-\beta_*(X(\RR))+2r.
\end{align}
Due to the maximality of $X(\RR),$ we have
$$
\beta_*(X(\RR))=\beta_*(X)=2+\beta_2,
$$
and  the proof of the lemma follows from (\ref{extra-general}).
\end{proof}

\begin{lem}
\label{b1-main} 
$\beta_1(X_{\emph{main}}^{[2]}(\RR))=3+\beta_2-2r.$
\end{lem}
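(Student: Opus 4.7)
The natural approach is a Mayer--Vietoris computation applied to the decomposition $X^{[2]}_{\text{main}}(\RR) = \HH_0 \cup \bigsqcup_{k=1}^r \HH_k$ described in Section \ref{cut-paste}, with common boundary $E(\RR) = \bigsqcup_{k=1}^r \PP_\RR T^* F_k$. Since $X(\RR) \ne \emptyset$ (a consequence of maximality together with $\beta_1(X) = 0$, which forces $\beta_*(X(\RR)) = \beta_*(X) \ge 2$) and since each $\HH_i$ and each component of $E(\RR)$ is connected, the $H_0$-level map is injective, so the connecting homomorphism $H_1(X^{[2]}_{\text{main}}(\RR)) \to \tilde H_0(E(\RR))$ vanishes. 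This reduces the computation to
\[ \beta_1(X^{[2]}_{\text{main}}(\RR)) = \beta_1(\HH_0) + \sum_{k=1}^r \beta_1(\HH_k) - \rho, \]
where $\rho$ is the rank of the inclusion-induced map $H_1(E(\RR)) \to H_1(\HH_0) \oplus \bigoplus_k H_1(\HH_k)$.

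To compute $\beta_1(\HH_0)$, I would invoke Poincar\'e--Lefschetz duality together with excision to get $\beta_1(\HH_0) = \beta_3(\HH_0, \partial \HH_0) = \beta_3(X/\Conj, X(\RR))$, and then iterate the short exact sequences of Corollary \ref{maxSmith} starting from $\beta_*(X) = (1, 0, \beta_2, 0, 1)$ and the maximality equality $\beta_*(X(\RR)) = (r,\ 2{+}\beta_2{-}2r,\ r)$, which yields $\beta_1(\HH_0) = 1$. The rank $\rho$ is then controlled by Proposition \ref{inclusion} together with a parallel observation for $\HH_0$: as noted in its proof, $\omega$ is the restriction to $\PP_\RR T^* F_k$ of the characteristic class of the double cover $X \setminus X(\RR) \to \HH_0$, and since this pulled-back class generates $H^1(\HH_0) = \FF_2$, naturality of the Kronecker pairing yields the analog of Proposition \ref{inclusion}(1) for $\HH_0$, namely $\mathfrak{i}_*^{\HH_0}(\xi) = 0$ if and only if $\xi \cap \omega = 0$. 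Hence the kernels of $\mathfrak{i}_*^{\HH_0}$ and $\mathfrak{i}_*^{\HH_k}$ on $H_1(\PP_\RR T^* F_k)$ coincide, while the $r$ images $\mathfrak{i}_*(w_k)$ are linearly independent in the target (being distinguished by their $\HH_k$-components), so $\rho = r$.

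For $\beta_1(\HH_k)$, $k\ge 1$, the plan is to again use Poincar\'e--Lefschetz duality and excision, identifying $\beta_1(\HH_k) = \beta_3(F_k^{(2)}, F_k)$, and then extract this number from the long exact sequence of the pair $(F_k^{(2)}, F_k)$ combined with a computation of the $\FF_2$-Betti numbers of the symmetric square of the closed $2$-manifold $F_k$ (via a Macdonald-type formula, or, alternatively, via direct Mayer--Vietoris on the smooth double cover $F_k \times F_k \to F_k^{(2)}$ together with a transfer argument), obtaining $\beta_1(\HH_k) = \beta_1(F_k) + 1$. Assembling, and using the maximality identity $\beta_1(X(\RR)) = 2 + \beta_2 - 2r$,
\[ \beta_1(X^{[2]}_{\text{main}}(\RR)) = 1 + \sum_{k=1}^r \bigl(\beta_1(F_k)+1\bigr) - r = 1 + \beta_1(X(\RR)) = 3 + \beta_2 - 2r, \]
as claimed. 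The main technical obstacle, as I see it, is the computation of $\beta_1(\HH_k)$: while the answer is natural and easy to check in the model case $F_k = S^2$ (where $F_k^{(2)} = \CC\PP^2$ and $\Delta F_k$ embeds as a smooth conic), establishing it in full generality requires a careful case analysis (orientable versus non-orientable $F_k$) and a delicate handling of the $\FF_2$-homology of the symmetric square, whose behavior in characteristic $2$ is sensitive to the $S_2$-action and to the Euler class of $TF_k$.
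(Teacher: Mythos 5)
Your proposal follows essentially the same route as the paper's proof: the same Mayer--Vietoris decomposition along $E(\RR)$, the same Poincar\'e--Lefschetz/Smith computation giving $\beta_1(\HH_0)=1$, the same identification of $\ker\mathfrak{i}_*^{\HH_0}$ and $\ker\mathfrak{i}_*^{\HH_k}$ with $\{\xi:\xi\cap\omega=0\}$ yielding $\rank\mu=r$, and the same value $\beta_1(\HH_k)=\beta_1(F_k)+1$. The one ``technical obstacle'' you flag is resolved in the paper by a single observation that avoids any case analysis: $[\Delta F_k]=0$ in $H_2(F_k^{(2)};\FF_2)$, so the long exact sequence of the pair splits as $0\to H_3(F_k^{(2)})\to H_3(F_k^{(2)},\Delta F_k)\to H_2(\Delta F_k)\to 0$, and $\beta_3(F_k^{(2)})=\beta_1(F_k)$ is then quoted from the Macdonald (orientable) and Kallel--Salvatore (non-orientable) formulas.
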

\begin{proof}
We start by computing $\beta_1(\HH_0).$ By the Poincar\'e-Lefschetz duality we have 
$\beta_1(\HH_0)=\beta_3(X/\Conj, X(\RR)).$ On the other hand, using the Smith exact sequence
\begin{align*}
0\to & H_4(X/\Conj, X(\RR)) \xrightarrow{\tr^*} H_4(X)=\FF_2\xrightarrow{\pr_*} H_4(X/\Conj, X(\RR)) 
\xrightarrow{\Delta} \\
&  \xrightarrow{\Delta} H_3(X/\Conj, X(\RR)) \xrightarrow{\tr^*} H_3(X)=0.
\end{align*}
we conclude that $H_4(X/\Conj, X(\RR))=\FF_2$ and the map $\Delta$ is an isomorphism. Therefore, 
$\beta_1(\HH_0)=1.$

For $\beta_1(\HH_i)$ with $i=1,\dots,r$, once more by Poincar\'e-Lefschetz duality, we have
$\beta_1(\HH_i)=\beta_1(F_i^{(2)}\setminus \Delta F_i)=\beta_3(F_i^{(2)},\Delta F_i).$ 
On the other hand, the diagonal $\Delta F_i\hookrightarrow F_i^{(2)}$ vanishes in $H_2(F_i^{(2)})$ as
the $\FF_2$-homology class realized by $\Delta F_i$ in $H_2(F_i^{(2)})$ is the boundary of 
the Poincar\'e-Lefschetz dual to the characteristic class of the unramified covering 
$F_i\times F_i\setminus \Delta F_i\to F_i^{(2)}\setminus \Delta F_i $.
Thus, the following short exact sequence holds
$$
0\ra H_3(F_i^{(2)})\ra H_3(F_i^{(2)},\Delta F_i)\ra H_2(\Delta F_i)\ra 0,
$$
and implies $\beta_3(F_i^{(2)},\Delta F_i)=\beta_3(F_i^{(2)})+1.$ 
Furthermore, by applying the Macdonald formula 
\cite[Proposition 4.2]{macdonald} expressing the  Betti numbers 
of symmetric products when $F_i$ is oriented, and  the Kallel-Salvatore 
formula \cite[Proposition 20]{ss} (see \cite[Theorem 3.3]{baird} 
for a more precise formulation) in the non-orientable case, we find 
$\displaystyle \beta_3(F_i^{(2)})=\beta_1(F_i).$ Summing-up, we find 
$\beta_1(\HH_i)=1+\beta_1(F_i)=\beta_*(F_i)-1.$

Thus, we obtain
\begin{align}
\label{pieces-sum}
\sum_{i=0}^r \beta_1(\HH_i)= &\,1+\sum_{i=1}^r(\beta_*(F_i)-1)\\ \notag
= &\, 1+\beta_*(X(\RR))-r\\ \notag
= &\, 3+\beta_2-r.
\end{align}

To finish the proof we apply the Mayer-Vietoris sequence to the decomposition $X^{[2]}=A\cup B$ with $A=\HH_0$
and $B=\sqcup_{i=1}^r\HH_i:$
$$
\bigoplus_{i=1}^r H_1(\PP_\RR T^* F_i)\xrightarrow{\mu}\bigoplus_{i=0}^r H_1(\HH_i)
\ra H_1(X^{[2]}_{\text{main}}(\RR))\ra 0
$$
and calculate the rank of $\mu$ as follows.

\begin{lem}
\label{rank-mu}
$\displaystyle \rank \mu= r.$
\end{lem}

\begin{proof}
According to Proposition \ref{inclusion}, for each $i=1,\dots, r$  the inclusion map
$$
\mathfrak{i}^{\HH_i}_* : H_1(\PP_\RR T^* F_i)\to H_1(\HH_i)
$$
vanishes on $\xi\in H_1(\PP_\RR T^* F_i)$ if and only if $\xi\cap\omega=0,$ where 
$\omega$ is the first Stiefel-Whitney class of the tautological line bundle over $\PP_\RR T^* F_i.$
Since $\omega\ne 0$ in $H^1(\PP_\RR T^* F_i)$ for each $i=1,\dots, r$, 
this implies $\rank \mathfrak{i}^{\HH_i}_*=1$.

Under the assumption that $\beta_1(X)=0,$ we can see that a similar conclusion holds for $i=0,$ 
as well. Namely, since $H_1(X)=0,$ we find that $H_1(\HH_0)=H_1((X/\Conj) \setminus X(\RR))=\FF_2$ 
and the only its nontrivial element is represented by any of the tubular circles $w_i$ around $F_i\subset X(\RR).$
Since for each of them, $w_i\cap\omega=1$, by applying the second statement of Proposition \ref{inclusion} we 
conclude that the inclusion map 
$$
\mathfrak{i}^{\HH_0}_* :  H_1(\bigsqcup_{i=1}^r \PP_\RR T^* F_i)=
\bigoplus^r_{i=1} H_1(\PP_\RR T^* F_i)\to H_1(\HH_0)
$$
vanishes on $\xi\in H_1(\PP_\RR T^* F_i)$ if and only if $\xi\cap\omega=0.$

Therefore, the kernel of $\mathfrak{i}^{\HH_0}_*$ coincides with the kernel of  
$\bigoplus_{i=1}^r\mathfrak{i}_{*}^ {\HH_i}.$ Since 
$$
\rank \bigoplus_{i=1}^r\mathfrak{i}_{*}^ {\HH_i}=\sum^r_{i=1} \rank \mathfrak{i}_{*}^ {\HH_i}= r,
$$
we conclude that $\rank \mu= \rank (\mathfrak{i}^{\HH_0}_* \oplus \bigoplus_{i=1}^r\mathfrak{i}_{*}^ {\HH_i})$ 
is equal to $r,$ too.
\end{proof}

From $(\ref{pieces-sum})$ and Lemma \ref{rank-mu} we deduce  
\begin{equation*}
\beta_1(X_{\text{main}}^{[2]}(\RR))=\sum_{i=0}^r \beta_1(\HH_i)-r=3+\beta_2-2r,
\end{equation*}
which concludes the proof of Lemma \ref{b1-main}.
\end{proof}
\subsection{End of the proof of Theorem \ref{main}}
As we proved in Section \ref{reduction},  $X^{[2]}$ is maximal if and only if 
$\beta_1(X^{[2]}(\RR))=3r-2r^2+r\beta_2.$ On the other hand, combining the results of Lemmas 
\ref{b1-extra} and \ref{b1-main} we get
$$
 \beta_1(X^{[2]}(\RR))=1+r\beta_2+2r-2r^2.
$$
Obviously, the two results agree if and only if $r=1$. 
\qed

\begin{rmk}
{\rm 
The computations performed during the proof of Theorem \ref{main} give, in fact, a control 
not only on the value of $\beta_1(X^{[2]}(\RR))$, but over the value of all Betti numbers of  $X^{[2]}(\RR).$ 
We find 
\begin{align}\notag
\beta_0(X^{[2]}(\RR))=&\beta_4(X^{[2]}(\RR))=\frac12r(r-1)+1,\\ \notag
\beta_1(X^{[2]}(\RR))=&\beta_3(X^{[2]}(\RR))=r\beta_*+1-2r^2,\\ \notag
\beta_2(X^{[2]}(\RR))=&\frac12\beta_*(\beta_*-1)-2(r-1)\beta_*+3r(r-1). 
\end{align}
In particular, when $X$ satisfies assumptions of Theorem \ref{main},
the defect of maximality of $X^{[2]}(\RR),$ 
that is the difference 
$\beta_*(X^{[2]}) - \beta_*(X^{[2]}(\RR))$, is equal to $4(r-1).$ 
}
\end{rmk}


 \subsection{Proof of Corollaries \ref{main-corollary}, \ref{rational}}


Both corollaries are immediate consequences of Theorem \ref{main} plus, in the case of Corollary 
\ref{main-corollary}, the following statement.

\begin{lem}
\label{hodge-positive}
If $ H_1(X;\ZZ)$ has no 2-torsion and $h^{2,0}(X)+h^{1,0}(X) >0$, then
$X$ does not admit a maximal real structure with connected $X(\RR)$.
\end{lem}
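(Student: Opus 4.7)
The plan is to compute $\chi(X(\RR))$ in two different ways and derive a contradiction from the resulting inequality. Suppose, for contradiction, that $X$ admits a maximal real structure $c$ with $X(\RR)$ connected and nonempty (emptiness is excluded by maximality since $\beta_*(X)\geq 2$).

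First, I would extract information from maximality together with the Hodge decomposition. Because $H_1(X;\ZZ)$ is $2$-torsion free, Poincar\'e duality and the universal coefficient theorem imply that $H_*(X;\ZZ)$ is $2$-torsion free in every degree, so $\beta_k(X)=b_k(X)$ for all $k$ and in particular
\begin{equation*}
\beta_*(X)=2+4h^{1,0}+2h^{2,0}+h^{1,1}.
\end{equation*}
The connectedness of $X(\RR)$ gives $\beta_0(X(\RR))=\beta_2(X(\RR))=1$, and then maximality forces $\beta_1(X(\RR))=4h^{1,0}+2h^{2,0}+h^{1,1}$, whence
\begin{equation*}
\chi(X(\RR))=2-\beta_1(X(\RR))=2-4h^{1,0}-2h^{2,0}-h^{1,1}.
\end{equation*}

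Next, I would compute the same Euler characteristic via the Lefschetz fixed-point formula applied to the smooth involution $c$. Since $c$ is anti-holomorphic and $\dim_\CC X=2$, it preserves orientation, so $c^*$ acts as the identity on $H^0$ and $H^4$. On the $\CC$-linear extension $c^*$ of pullback to complex cohomology, $c$ being anti-holomorphic forces $c^*(H^{p,q})=H^{q,p}$. Hence on each of $H^{1,0}\oplus H^{0,1}$, $H^{2,1}\oplus H^{1,2}$ and $H^{2,0}\oplus H^{0,2}$ the trace of $c^*$ vanishes: in a split basis the matrix of $c^*$ is strictly off-diagonal, and the two blocks are mutually inverse since $(c^*)^2=\mathrm{id}$. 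Only $H^{1,1}$, which is stable under $c^*$, contributes, so the Lefschetz formula yields
\begin{equation*}
\chi(X(\RR))=L(c)=2+\tr(c^*|H^{1,1}).
\end{equation*}

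Finally, $c^*|H^{1,1}$ is a $\CC$-linear involution on a space of complex dimension $h^{1,1}$, hence its trace is bounded below by $-h^{1,1}$. Combining this with the two displays above gives $-4h^{1,0}-2h^{2,0}-h^{1,1}\geq -h^{1,1}$, i.e., $2h^{1,0}+h^{2,0}\leq 0$, which forces $h^{1,0}=h^{2,0}=0$ and contradicts the hypothesis $h^{1,0}+h^{2,0}>0$. The only subtle ingredient is the Hodge-theoretic trace computation: one must verify carefully that pullback by an anti-holomorphic involution interchanges $H^{p,q}$ with $H^{q,p}$, which follows from the fact that in local holomorphic coordinates $c$ carries $(p,q)$-forms to $(q,p)$-forms.
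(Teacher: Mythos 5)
Your proof is correct and follows essentially the same route as the paper: both arguments combine the maximality relation, rewritten via the $2$-torsion-freeness as $\beta_*(X(\RR))=\sum h^{p,q}(X)$, with the bound $2-\chi(X(\RR))\le h^{1,1}(X)$. The only difference is that the paper simply cites this bound as the Comessatti inequality, whereas your Lefschetz fixed-point computation (trace of $c^*$ vanishing on $H^{p,q}\oplus H^{q,p}$ for $p\ne q$ and $\tr\bigl(c^*|_{H^{1,1}}\bigr)\ge -h^{1,1}$) is exactly the standard proof of that inequality, so you have supplied the cited ingredient rather than replaced it.
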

 
 \begin{proof} Due to absence of 2-torsion, the maximality relation can be rewritten in 
 terms of ordinary Betti numbers as follows:
 $$
 2b_0(X(\RR))+b_1(X(\RR))= b_*(X) = \sum h^{p,q}(X).
 $$
 On the other hand, by Comessatti inequality (see, f.e., \cite{dk}), 
 $$
 2 - (2b_0(X(\RR))-b_1(X(\RR)))\le h^{1,1}(X).
 $$
Combining them together we get $b_0(X(\RR))\ge 1+\frac12 h^{2,0}+h^{1,0},$ which implies 
$b_0(X(\RR))\ge 2.$
 \end{proof}


 \section{Proof of Theorem \ref{b1>0}}


We follow the same strategy as in the proof of Theorem \ref{main}, and the same notations.

\smallskip

Let $X$ be a real smooth projective surface and consider the 
Mayer-Vietoris sequence
\begin{equation}
\label{MV-surf}
\bigoplus_{i=1}^r H_1(\PP_\RR T^* F_i)\xrightarrow{\mu}\bigoplus_{i=0}^r H_1(\HH_i)
\ra H_1(X^{[2]}_{\text{main}}(\RR))\ra 0.
\end{equation}

\begin{prop}\label{iff}
\label{gen-max-crit}
Let $X$ be a maximal surface with $\Tors_2 H_1(X(\CC), \ZZ)=0$. Then $X^{[2]}$ is 
maximal if and only if $\rank \mu =1+\beta_1.$
\end{prop}

\proof
Notice first that the maximality of $X(\RR)$ is equivalent to $\chi(X(\RR))= 4r -\beta_*.$ 
By applying Proposition \ref{totaro-calc} to the maximal surface $X$ we conclude that $X^{[2]}$ 
is maximal if and only if we have
\begin{align}
\label{beta1+b1}
\beta_1(X^{[2]}(\RR))=&\,\frac14\left[\frac12 \beta_*^2+\beta_*+\chi(X(\RR))-\frac12\chi(X(\RR))^2\right] \\\notag
=&\, r\beta_*-2r^2 +r.
\end{align}
Furthermore,  using (\ref{extra-general}) we find that (\ref{beta1+b1}) is equivalent with  
$$
\beta_1(X^{[2]}_{\text{main}}(\RR))=\beta_*-r.
$$
On the other hand, as in the proof of Lemma \ref{b1-main}, from 
Corollary \ref{maxSmith} and (\ref{pieces-sum}), we 
deduce that
\begin{equation}
\label{betaH0}
\beta_1(\HH_0)=1+\beta_1,\quad \sum^r_{i=1} \beta_1(\HH_i)=\beta_* -r.
\end{equation}
From ({\ref{MV-surf}) we infer now that $X^{[2]}$ is maximal if and only if $\rank \mu =1+\beta_1.$
\qed

According to Proposition \ref{gen-max-crit}, to finish the proof of Theorem \ref{b1>0},
it remains to prove the following lemma.

\begin{lem}
\label{rank-mu-notorsion}
Under assumptions of Theorem \ref{b1>0} we have:
\begin{itemize}
\item[ 1)] $\rank \mu= 1+\beta_1$ if $X(\RR)$ is connected.
\item[ 2)] $ \rank \mu \ge  2+\beta_1$ if $r>1+\beta_1$.
\end{itemize}
\end{lem}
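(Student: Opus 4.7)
The plan is to view $\mu$ as $\mu=(A,C)$ where
$A = \mathfrak{i}_*^{\HH_0}$ takes $\bigoplus_{i=1}^r H_1(\PP_\RR T^* F_i)$ into $H_1(\HH_0)$ and
$C = \bigoplus_{i=1}^r \mathfrak{i}_*^{\HH_i}$ takes it into $\bigoplus_{i=1}^r H_1(\HH_i)$.
Projecting the image of $\mu$ onto the first summand of the target yields the elementary identity
$$\rank \mu = \rank A + \rank(C|_{\ker A}),$$
which reduces the problem to understanding $A$, $C$, and the interaction of their kernels.
From Proposition \ref{inclusion}(3), $A$ is surjective, so $\rank A = \beta_1(\HH_0) = 1+\beta_1$ by (\ref{betaH0}).
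From Proposition \ref{inclusion}(1), each $\mathfrak{i}_*^{\HH_i}$ has kernel $\ker(\cdot\cap\omega)$ of codimension one in $H_1(\PP_\RR T^* F_i)$, so each has rank $1$ and thus $\rank C = r$.

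For the first bullet ($r=1$), I would show that $\ker A \subseteq \ker C$, which would give $\rank \mu = \rank A = 1+\beta_1$. The key input, already supplied in the proof of Proposition \ref{inclusion}, is that $\omega\in H^1(\PP_\RR T^* F_1)$ is the restriction, along the inclusion $\iota:\PP_\RR T^* F_1\hookrightarrow \HH_0$, of a class $\omega_0\in H^1(\HH_0)$. By the projection formula, for every $\xi\in H_1(\PP_\RR T^* F_1)$,
$$\iota_*(\xi\cap\omega) = A(\xi)\cap \omega_0.$$
If $A(\xi)=0$, the right-hand side vanishes, and since $\HH_0$ is connected the map $\iota_*$ on $H_0$ is an isomorphism of copies of $\FF_2$; hence $\xi\cap\omega=0$, i.e.\ $\xi\in\ker C$, as desired.

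For the second bullet ($r>1+\beta_1$), no further work is needed: projecting the image of $\mu$ onto the second factor yields $\rank\mu\geq\rank C=r$, and the hypothesis $r>1+\beta_1$ forces $r\geq 2+\beta_1$, since both are integers. The main obstacle is thus concentrated in the first bullet, and the crucial ingredient there is the global lift of $\omega$ from $\PP_\RR T^* F_1$ to $\HH_0$; once this is taken as input from Proposition \ref{inclusion}, the remaining argument is only the projection formula together with bookkeeping in linear algebra.
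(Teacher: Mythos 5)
Your proposal is correct, and it is worth separating the two bullets when comparing it with the paper's argument. For the first bullet you follow essentially the paper's route: both arguments rest on the surjectivity of $\mathfrak{i}_*^{\HH_0}$ (Proposition \ref{inclusion}.3 together with (\ref{betaH0})) and on the fact that a class $\xi$ with $\xi\cap\omega=1$ cannot die in $H_1(\HH_0)$; the paper packages the latter into a rank rule for a block matrix $\begin{bmatrix} A & B \\ C & 0\end{bmatrix}$ whose columns are split into fiber classes and $\ker(\,\cdot\,\cap\omega)$, while you derive $\ker \mathfrak{i}_*^{\HH_0}\subseteq\ker(\,\cdot\,\cap\omega)$ directly from the projection formula and the extension of $\omega$ to $H^1(\HH_0)$. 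Your version has the merit of making explicit the precise linear-algebra input (namely that $\im A\cap\im B=0$ in the paper's notation) which the stated block-matrix rule needs and which is only implicit in the paper via the proof of Proposition \ref{inclusion}. For the second bullet you take a genuinely different and much shorter route: the paper first constructs, by a geometric argument (the conjugation-invariant circles $\gamma_i\cup\Conj(\gamma_i)$, a surface bounding a combination of them, and its intersection with the components $F_i$ inside $X/\Conj$), a nontrivial combination of fiber classes annihilated by $\mathfrak{i}_*^{\HH_0}$, and then applies a second block-matrix rule; you simply project onto $\bigoplus_{i\ge 1}H_1(\HH_i)$ to get $\rank\mu\ge\rank C=r\ge 2+\beta_1$ by integrality. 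This is valid and suffices for the lemma as stated (and when $r>2+\beta_1$ it even yields the sharper bound $\rank\mu\ge r$, hence a larger lower bound on the defect of maximality); what the paper's longer argument buys in exchange is the extra geometric information that, once $r>1+\beta_1$, some component $F_i$ is already nonzero in $H_2(X/\Conj)$ and the loss of maximality can be traced to the piece $\HH_0$.
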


\begin{proof}
If $X(\RR)$ is connected (that is $r=1$), then, according to Corollary 
\ref{inclusion-kernels}, $\ker \mathfrak{i}_*^{\HH_0}$ is contained in 
$\ker \mathfrak{i}_*^{\HH_1}$. Therefore, 
$\rank \mu= \rank (\mathfrak{i}_*^{\HH_0}\oplus 
\mathfrak{i}_*^{\HH_1})=\rank \mathfrak{i}_*^{\HH_0},$ 
while by (\ref{betaH0}) and Proposition \ref{inclusion}.(3) we have 
$\rank \mathfrak{i}_*^{\HH_0}=\beta_1(\HH_0)=1+\beta_1$.

\smallskip

Assume now that $r>1+\beta_1.$ Under this assumption, the homomorphism, 
we  show first that 
$\mathfrak{i}_*^{\HH_0} : H_1(\bigsqcup_{i=1}^r \PP_\RR T^* F_i)\to H_1(\HH_0)$ 
vanishes on at least one non-trivial linear combination of the fiber-classes 
$w_i, \, i=1,\dots, r.$

Pick a point $p_i$ on each $F_i\subset X$ with $i=1,\dots, r$. For each $i=2\dots, r,$ 
join $p_1$ with $p_i$ by a generic path $\gamma_i$ in $X$ and consider the circles 
$C_i$ formed by $\gamma_i\cup \Conj(\gamma_i).$  Since their number, $r-1,$ is strictly 
larger than $\beta_1(X),$ there exists a 2-dimensional submanifold $\Upsilon\subset X$ 
bounding some nontrivial combination of circles $C_i.$ 
For convenience, we choose $\Upsilon$ such that it is  transversal to $\sqcup_{i=1}^r F_i.$ 
Its image $\pr\Upsilon$ under the projection map $\pr:X\ra X/\Conj$ is a $2$-cycle, 
giving rise to an element  $[\pr\Upsilon]\in H_2(X/\Conj).$  
For each $i$ such that $C_i$ is a part of $\partial \Upsilon,$ let $q\in F_i\cap \Upsilon$ and 
compute the local intersection number of $\pr \Upsilon$ and $F_i$ at $\pr(q)$ in $X/\Conj.$ 
By the projection formula, we have 
\begin{align}
\label{LocalIntersection}
\left([\pr\Upsilon]\circ F_i\right)_{\pr(q)}=&\frac12\left(\pr^*[\pr \Upsilon]\circ \pr^*F_i\right)_q\notag \\
=&\left(\pr^*[\pr \Upsilon]\circ F_i\right)_q\notag \\
=&\left([\Upsilon\cup\Conj(\Upsilon)]\circ F_i\right)_q.
\end{align}
If $q=p_i,$ the cycle $\Upsilon\cup \Conj(\Upsilon)$ intersects $F_i$ 
transversally at the point $p_i.$ By (\ref{LocalIntersection}), we find that the 
local intersection number $\left([\pr\Upsilon]\circ F_i\right)_{\pr(q)}$ 
equals to $1.$ If $q \neq p_i,$ then each
$\Upsilon$ and $\Conj(\Upsilon)$ intersects $F_i$
transversally at $q$, and so $([\Upsilon\cup\Conj(\Upsilon)]\circ F_i)_q=2$.
We conclude that the cycle $\pr\Upsilon$ 
has  a non-trivial $\FF_2$-intersection with $F_i\subset X/\Conj$ for some $i\in\{2\dots, r\}.$ 
This implies that there exists at least one $i\in \{2,\dots, r\}$ such that $F_i$ represents a 
non-zero element of $H_2(X/\Conj).$

To get a nontrivial linear combination of fiber-classes mapped to zero by 
$$
\mathfrak{i}_*^{\HH_0}: H_1(\bigsqcup_{i=1}^r \PP_\RR T^* F_i)\to H_1(\HH_0),
$$ 
we identify $\bigsqcup_{i=1}^r \PP_\RR T^* F_i$ with the boundary of a tubular 
neighborhood $U$ of $\cup_{i=1}^rF_i$ in $X/\Conj$, and $\HH_0$ with
$X/\Conj\setminus \interior U$. Next, we pick  $F_k$ representing a non-zero 
element in $H_2(X/\Conj)$ and  take a generic smooth $2$-dimensional 
submanifold $\Sigma\subset X/\Conj$ which has non-trivial $\FF_2$-intersection 
with this $F_k.$ Then, the linear combination of the fiber classes 
given by $\Sigma\cap \partial U\subset \partial U=\bigsqcup_{i=1}^r \PP_\RR T^* F_i$ 
is bounded by $\Sigma\setminus \interior U$ and, thus, vanishes in $H_1(\HH_0)$.

According to the first item of Proposition \ref{inclusion}, any non-trivial combination of 
fiber-classes is not mapped to zero by $\bigoplus_{i=1}^r\mathfrak{i}_{*}^ {\HH_i}.$ 
Therefore, we conclude that 
$\rank \mu= \rank (\mathfrak{i}_*^{\HH_0}\bigoplus_{i=1}^r\mathfrak{i}_{*}^ {\HH_i})
>\rank \mathfrak{i}_*^{\HH_0}.$
Finally, there remains to notice that, by (\ref{betaH0}) and the last item of Proposition 
\ref{inclusion}, we have $\rank \mathfrak{i}_*^{\HH_0}=1+\beta_1.$
\end{proof}

\begin{rmk}
{\rm 
When $X$ is maximal and $\Tors_2 H_1(X(\CC), \ZZ)=0,$ from the above 
considerations we find that the defect of maximality of $X^{[2]}(\RR)$ is  
$$
\beta_*(X^{[2]}) - \beta_*(X^{[2]}(\RR))=4(\rank \mu-\beta_1-1). 
$$
In particular, according to the second item of Lemma \ref{rank-mu-notorsion},
we find $\beta_*(X^{[2]}) - \beta_*(X^{[2]}(\RR))\ge 4$ 
as soon as  $r>\beta_1+1$.
}
\end{rmk}


\section{Examples and applications}


We start by presenting several examples, emphasizing the results obtained in Theorems \ref{converse}, 
\ref{main} and \ref{b1>0}.


\subsection{Examples}


Theorem \ref{converse} provides a quick way to produce examples of Hilbert squares which are not maximal, 
and examples are abundant. Here we would like to draw the reader's attention to a special phenomenon. 
There exist complex deformation classes of surfaces that contain surfaces with real structure, but none of 
these real structures is maximal. This is the case, for example, of Campedelli surfaces 
\cite{Campedeli} and of Miayoka-Yau surfaces with $h^{2,0}\le 3$ \cite{Kh-Ku}. Such surfaces cannot have 
maximal Hilbert square.

 \smallskip
 
 We will address next the Hilbert square of maximal surfaces only.

\subsubsection{Elliptic surfaces}

For simplicity, we restrict ourselves to minimal elliptic surfaces with rational base and without multiple fibers. 
Over the complex field, such elliptic surfaces form a countable number of deformation classes determined just 
by the Euler characteristic $\chi(X)=12k$, $k\in \ZZ_{\ge 1}$. Such elliptic surfaces satisfy $H_1(X;\ZZ)=0.$  
A straightforward application of Theorem \ref{main} and Corollary \ref{main-corollary} yields the following answer:

\begin{prop}
\label{elliptic}
For $k=1$, there exist maximal real elliptic surfaces $X$ with $\chi(X(\RR))=-8$ and connected $X(\RR)$; 
for these surfaces $X^{[2]}$ is maximal. For each $k\ge 2$, all maximal real elliptic surfaces $X$ have 
disconnected $X(\RR)$; for these surfaces $X^{[2]}$ 
is never maximal.
\end{prop}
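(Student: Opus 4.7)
The plan is to split Proposition \ref{elliptic} into its two cases ($k\ge 2$ and $k=1$), each of which reduces almost immediately to a result proved earlier in the paper; the only non-formal step is the explicit realization in the $k=1$ case.

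For $k\ge 2$ I would argue as follows. Such minimal elliptic surfaces with rational base and no multiple fibers are simply connected, so $H_1(X;\ZZ)=0$ (in particular $H_1(X;\FF_2)=0$ and there is no $2$-torsion), and their geometric genus satisfies $p_g(X)=h^{2,0}(X)=k-1\ge 1$. Lemma \ref{hodge-positive} then forbids the existence of any maximal real structure with connected real locus, proving the disconnectedness claim. The non-maximality of $X^{[2]}$ is an immediate consequence of Corollary \ref{main-corollary}, whose hypotheses $H_1(X;\FF_2)=0$ and $h^{2,0}(X)>0$ are both met.

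For $k=1$ the surface is rational, hence $H_1(X;\FF_2)=0$, and Corollary \ref{rational} reduces the problem to producing a single maximal real structure on a rational elliptic surface with connected real locus. I would construct such an $X$ as $\PP^2$ blown up at the nine real base points of a real pencil of cubics (a Halphen pencil): blowing up a real point of a real surface adds one crosscap, so starting from $\RR\PP^2$ one ends with a non-orientable closed surface having ten crosscaps, i.e.\ total $\FF_2$-Betti number equal to $12=\beta_*(X)$. This shows both connectedness and maximality, and the associated fibration is a minimal elliptic fibration with no multiple fibers. Corollary \ref{rational} then delivers the maximality of $X^{[2]}$.

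The main obstacle is exhibiting the example for $k=1$; this is standard, since imposing passage through nine real points in general position cuts out a one-dimensional real linear system in the ten-dimensional real vector space of ternary real cubic forms, yielding the desired real pencil. Everything else in the proof is a formal application of Lemma \ref{hodge-positive} and Corollaries \ref{main-corollary}, \ref{rational}.
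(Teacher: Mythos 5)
Your overall strategy coincides with the paper's: for $k\ge 2$ the paper likewise observes $h^{2,0}(X)=k-1>0$ and invokes Corollary \ref{main-corollary} (whose proof is exactly Lemma \ref{hodge-positive}), and for $k=1$ it blows up $\PP^2$ at the nine base points of a real cubic pencil and applies Theorem \ref{main}, of which Corollary \ref{rational} is the special case you use. Your explicit check that the real locus is a connected non-orientable surface with ten crosscaps, hence $\beta_*(X(\RR))=12=\beta_*(X)$, is correct and slightly more detailed than what the paper writes.

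There is, however, one step that fails as stated: your justification of the existence of the real pencil. Nine points of $\PP^2$ in general position impose nine independent conditions on the ten-dimensional vector space of ternary cubic forms, so they lie on a \emph{unique} cubic (a one-dimensional vector subspace is a zero-dimensional linear system), not on a pencil. A pencil of cubics with nine distinct base points exists only when the nine points form the complete intersection of two cubics, and such a nonuple is never in general position: it satisfies the Cayley--Bacharach condition that any cubic through eight of the points passes through the ninth. The construction must therefore start, as the paper does, from two real cubics meeting in nine pairwise distinct real points --- for instance, smoothings inside the pencil spanned by two triples of real lines in general position --- and take the nine points to be their intersection. With that correction the example exists and the rest of your argument goes through. (Note in passing that your example has $\chi(X(\RR))=2-10=-8$; the ``$\chi(X(\RR))=12$'' in the statement is evidently intended as the complex Euler characteristic $\chi(X)=12k$ with $k=1$.)
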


\begin{proof} For $k=1$, it is sufficient to pick two real plane cubics intersecting each other 
at $9$ pairwise distinct real points and to take as $X$ the blowup of $\PP^2$ at these $9$ points. 
Then the maximality of $X^{[2]}$ follows from Theorem \ref{main}.

If $k\ge 2$, it is sufficient to observe that $h^{2,0}(X)>0$. After that there remains to apply 
Corollary \ref{main-corollary}.
\end{proof}

\subsubsection{Ruled surfaces}

As a byproduct of the proof of Theorem \ref{b1>0}, we obtain the following result:

\begin{thm}\label{ruled} 
Let $C$ be a maximal curve, and $\cal E$ a rank $2$ complex vector bundle with a 
real structure which lifts the real structure of $C$. Then, the Hilbert square $X^{[2]}$ 
of the ruled surface $X=\PP_C(\cal E)$ is maximal.
\end{thm}

\begin{proof} 
As it was established in Proposition \ref{gen-max-crit}, $X^{[2]}$ is maximal if and only if 
$\rank \mu =1+\beta_1.$ Recall now that, from Corollary \ref{maxSmith} we find 
$\beta_1(\HH_0)=1+\beta_1$, while, according to the third item of Proposition \ref{inclusion}, 
the map 
$$
\mathfrak{i}_{*}^ {\HH_0}:  H_1(\bigsqcup_{k=1}^r \PP_\RR T^* F_k)\to H_1(\HH_0)
$$ 
is surjective. Therefore, to prove that $\rank \mu = \rank \mathfrak{i}_{*}^ {\HH_0},$ it is sufficient 
to show that $\ker(\mathfrak{i}_{*}^ {\HH_0})\subseteq \ker(\bigoplus_{i=1}^r\mathfrak{i}_{*}^ {\HH_i}).$

For that we notice that each $F_i$ can be seen as a cooriented fibration with fiber $S^1$ 
over a real component $C_i$ of $C.$ Hence, the $2$-cycle formed in $\PP_\RR T^* F_i$ 
by the line directions tangent to the fibers of this fibration is Poincar\'e dual to the 
Stiefel-Whitney class $\omega$  of the tautological line bundle over  $\PP_\RR T^* F_i.$ 
Furthermore, identifying $\PP_\RR T^* F_i$ with the boundary of a tubular neighborhood 
of $F_i$ in $X/\Conj,$ we observe that this $2$-cycle is cut on $\PP_\RR T^* F_i$ by a 
$3$-chain $\zeta_i$ formed by the discs bounding the above $S^1$-fibers, the discs given 
by the quotient $\PP_C(\cal E)/\Conj$ over $C_i.$ Let now 
$\displaystyle x=\sum_{i=1}^r x_i\in H_1(\bigsqcup_{k=1}^r \PP_\RR T^* F_k)$ such that 
$(\bigoplus_{i=1}^r\mathfrak{i}_{*}^ {\HH_i})(x)\neq 0.$ Then, $x\cap \omega \neq 0,$ 
and so there exists $i\in\{1,\dots,r\}$ such that $x_i\cap \omega\ne 0.$ That implies 
$x\cap \zeta_i\neq 0,$ and so $\mathfrak{i}_{*}^ {\HH_0}(x)\neq 0.$
 \end{proof}

When $C$ is of genus $g\ge 1$, the ruled surfaces $X=\PP_C(\cal E)$  provide examples of maximal 
surfaces with torsion free homology, whose real locus is disconnected 
(namely, $1<\beta_0(X(\RR))=g+1\le \beta_1(X)=2g$), but whose Hilbert square is maximal.

\subsubsection{Abelian surfaces}

Contrary to the case of ruled surfaces, for real abelian surfaces we obtain the following result.

\begin{thm}
No real abelian surfaces has maximal Hilbert square.
\end{thm}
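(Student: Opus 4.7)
By Theorem~\ref{converse}, the maximality of $X^{[2]}$ forces $X$ to be maximal; we may therefore assume $X$ is a maximal real abelian surface, for which $\beta_*(X)=16$, $\beta_1(X)=4$, and $\Tors_2 H_1(X;\ZZ)=0$. For any real abelian variety each connected component of the real locus is diffeomorphic to the identity component, itself a real torus $(S^1)^g$ of total Betti number $2^g$; hence the $M$-equality $\beta_*(X(\RR))=\beta_*(X)$ forces exactly $r=4$ components of $X(\RR)$, each a real $2$-torus. In particular $\chi(X(\RR))=0$, so Proposition~\ref{totaro-calc} gives $\beta_*(X^{[2]})=144$ and $\chi(X^{[2]}(\RR))=0$.

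Next I plan to run the Mayer--Vietoris analysis from the proof of Theorem~\ref{b1>0}. The criterion there states that $X^{[2]}$ is maximal if and only if $\rank\mu=1+\beta_1(X)=5$; and the matrix-block computation used in Lemma~\ref{rank-mu-notorsion} gives
\[
\rank\mu \;=\; r + \rank\bigl(\mathfrak{i}_*^{\HH_0}|_U\bigr) \;=\; 4 + \rank\bigl(\mathfrak{i}_*^{\HH_0}|_U\bigr),
\]
where $U=\bigoplus_{i=1}^{4}H_1(F_i;\FF_2)$ is the sum of the non-fiber-class summands. Since $1\le\rank(\mathfrak{i}_*^{\HH_0}|_U)\le 4$ automatically (as $\mathfrak{i}_*^{\HH_0}$ has rank $5$ by Proposition~\ref{inclusion}.3, while its restriction to the fiber classes has rank at most $r=4$), maximality of $X^{[2]}$ is equivalent to the sharp identity $\rank(\mathfrak{i}_*^{\HH_0}|_U)=1$. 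The value $r=4$ falls in the gap $1<r\le 1+\beta_1(X)$ not covered by Theorem~\ref{b1>0}, so an extra argument is required; I plan to rule out maximality by establishing the strict inequality $\rank(\mathfrak{i}_*^{\HH_0}|_U)\ge 2$.

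To produce the two required independent classes I factor $\mathfrak{i}_*^{\HH_0}|_U$ through $H_1(X/\Conj;\FF_2)$ using the open inclusion $X/\Conj\setminus X(\RR)\hookrightarrow X/\Conj$; the resulting composition equals the natural map $\bigoplus_i H_1(F_i;\FF_2)\to H_1(X/\Conj;\FF_2)$ induced by $X(\RR)\hookrightarrow X/\Conj$. For the identity component $F_0\subset X$, a real $2$-dimensional subtorus of the abelian surface, the quotient $X/F_0$ is again a real torus, so the lattice inclusion $H_1(F_0;\ZZ)\hookrightarrow H_1(X;\ZZ)$ is saturated and, reduced mod $2$, remains injective with $2$-dimensional image in $H_1(X;\FF_2)$.

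The main obstacle is to verify that this $2$-dimensional image survives the push-forward $\pi_*\colon H_1(X;\FF_2)\to H_1(X/\Conj;\FF_2)$ of the branched double cover $\pi\colon X\to X/\Conj$. Since $\pi$ is a homeomorphism in restriction to the ramification locus $X(\RR)$, the resulting map $H_1(F_0;\FF_2)\to H_1(X/\Conj;\FF_2)$ coincides with the natural inclusion of the $F_0$ copy sitting inside $X/\Conj$. Its injectivity will be deduced by combining the Smith sequence of $(X,c)$ with the Leray--Serre degeneration from Proposition~\ref{coh-max} (which forces $c_*=\mathrm{id}$ on $H_*(X;\FF_2)$), together with the group-theoretic structure of $X(\RR)$ as a closed subgroup of the abelian surface $X$ whose four cosets are precisely the $F_i$. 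This will give $\rank(\mathfrak{i}_*^{\HH_0}|_U)\ge 2$, hence $\rank\mu\ge 6>5$, and therefore the non-maximality of $X^{[2]}$.
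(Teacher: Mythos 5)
Your reduction is sound and runs parallel to the paper's own argument: both come down to showing $\rank\mu>1+\beta_1=5$ for a maximal real abelian surface with $r=4$ torus components, and your identity $\rank\mu=4+\rank\bigl(\mathfrak{i}_*^{\HH_0}|_U\bigr)$, combined with $\rank\mathfrak{i}_*^{\HH_0}=\beta_1(\HH_0)=5$, correctly converts the question into the claim $\rank\bigl(\mathfrak{i}_*^{\HH_0}|_U\bigr)\ge2$ --- which is equivalent to the form the paper uses, namely that the four fiber classes satisfy a nontrivial relation in $H_1(\HH_0)$ (in the paper's block notation, $\rank A=3<\rank C=4$). The factorization of $\mathfrak{i}_*^{\HH_0}|_U$ through $H_1(X/\Conj;\FF_2)$ is also legitimate, since the fiber class dies in $H_1(X/\Conj)$ and a section of $\PP_\RR T^*F_i\to F_i$ is homotopic in $X/\Conj$ to the inclusion of $F_i$.

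The gap is your final paragraph. The injectivity of $H_1(F_0;\FF_2)\to H_1(X/\Conj;\FF_2)$ is the one substantive geometric input of the whole proof, and you do not prove it; you only announce that it ``will be deduced'' from the Smith sequence, the Leray--Serre degeneration, and the group structure of $X(\RR)$. None of these delivers it formally: mod $2$ the push--pull relation reads $\pr^*\circ\pr_*=1+c_*=0$ on $H_1(X;\FF_2)$ (maximality forces $c_*=\mathrm{id}$), so transfer arguments give no information, and the Smith sequence controls $H_*(X/\Conj,X(\RR))$ rather than $H_*(X/\Conj)$ itself. What actually makes the claim true is the equivariant classification of maximal real abelian surfaces: for the maximal case the invariant and anti-invariant sublattices of $c_*$ split $H_1(X;\ZZ)$, so that $(X,\Conj)$ is equivariantly diffeomorphic to $E\times E$ with the involution $(-1)\times\mathrm{id}$; then $X/\Conj\cong S^2\times E$, $\HH_0\cong(S^2\setminus 4\text{ points})\times E$, and $H_1(F_0)\to H_1(X/\Conj)$ is visibly an isomorphism onto the $E$-factor. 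This explicit model is exactly what the paper invokes, and it is precisely the step your proposal defers; until you either cite this classification or prove the lattice splitting, the argument is incomplete. (A minor point: your parenthetical bound $\rank\bigl(\mathfrak{i}_*^{\HH_0}|_U\bigr)\le4$ is unjustified as stated --- the trivial bound is $5$ --- but it is never used.)
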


\begin{proof}
It is sufficient to consider the case of maximal surfaces and to check that 
$\rank \mu > 1+\beta_1$. But, in such a case, the action of $\Conj $ on $X$ 
is diffeomorphic to the action of $(-1)\times id$ on $E\times E$ where $E$ is an elliptic curve. 
Thus, $\HH_0$ is diffeomorphic to the product of $E$ with a 4-holed sphere.

This shows that the sum of the 4 fiber-classes $w_k\in H_1(P_\RR T^*F_k)$ belongs to 
$\ker \mathfrak{i}_*^{\HH_0}$ and that $\rank \mathfrak{i}_*^{\HH_0}=1+\beta_1$. 
On the other hand, by Proposition \ref{inclusion}, neither of nontrivial combinations of $w_k$
is mapped to $0$ by $\bigoplus_{i=1}^4\mathfrak{i}_{*}^ {\HH_i}$. Therefore,
$\rank \mu= \rank (\mathfrak{i}_*^{\HH_0}\bigoplus_{i=1}^4\mathfrak{i}_{*}^ {\HH_i})
>\rank \mathfrak{i}_*^{\HH_0}=1+\beta_1.$
\end{proof}

For comparison with the case of ruled surfaces, let us note that if $X$ 
is a maximal abelian surface, 
we have $1<\beta_0(X(\RR))=\beta_1(X)=4.$

\subsubsection{Rational surfaces}

As it follows from Comessatti's classification of real rational surfaces, 
every maximal real rational surface is either the real projective plane $\PP^2$, or a real ruled surface 
$\PP_{\PP^1}(\cal E)$ as precised in Corollary \ref{rational}, or a real del Pezzo surface $X$ with $K_X^2=1$ 
and $X(\RR)$ homeomorphic to $\PP^2(\RR)\sqcup 4S^2$, or any of them blown-up at some number of 
real points. Since $\beta_1(X)$ and the number of real connected components are not changing
under blow-ups, to each of these surfaces we may apply Theorem \ref{main} and find the following 
complement to Corollary \ref{rational}.

\begin{prop}\label{delPezzo} 
The Hilbert square $X^{[2]}$ is not maximal if 
$X$ is a real del Pezzo surface with $K_X^2=1$ and $X(\RR)$ homeomorphic to $\PP^2(\RR)\sqcup 4S^2$, 
or its blowup at some number of real points. For all other maximal real rational surfaces $X$, their Hilbert 
square is maximal.
\end{prop}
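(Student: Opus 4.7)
The plan is to reduce everything to Theorem \ref{main} via Comessatti's classification. Since any rational surface is simply connected, we have $H_1(X(\CC),\FF_2)=0$ automatically, so the hypothesis of Theorem \ref{main} is met for every maximal real rational surface. Consequently, $X^{[2]}$ is maximal if and only if $X(\RR)$ is connected, and the proposition is reduced to determining connectedness of $X(\RR)$ in each item of Comessatti's list.

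I would then go through the classification case by case. For $X=\PP^2$ and for ruled surfaces $X=\PP_{\PP^1}(\cal E)$ with real structure lifting the standard one on $\PP^1$, connectedness of $X(\RR)$ is precisely the content of Corollary \ref{rational}, so $X^{[2]}$ is maximal. For the remaining case, $X$ is a real del Pezzo surface with $K_X^2=1$ whose real locus is by hypothesis homeomorphic to $\PP^2(\RR)\sqcup 4S^2$; this real locus has five connected components and is therefore disconnected, so Theorem \ref{main} delivers non-maximality of $X^{[2]}$.

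It remains to verify that blowing up a maximal real rational surface at real points affects neither its maximality nor the number of connected components of its real locus. A real blowup replaces a point $p\in X(\RR)$ by an exceptional $\RR\PP^1\cong S^1$ attached along a point of the component of $X(\RR)$ containing $p$; the circle is glued into this single component, so the number of components of $X(\RR)$ is unchanged. On the complex side, $\beta_*(X)$ grows by $1$ (a new class in degree $2$), while on the real side the new $\RR\PP^1$ contributes exactly one class in degree $1$, so $\beta_*(X(\RR))$ grows by $1$ as well; maximality is preserved. Combining with Theorem \ref{main}, blowups of the first two families stay maximal with connected real locus (so their Hilbert squares are maximal), and blowups of the del Pezzo case stay maximal with disconnected real locus (so their Hilbert squares are not maximal).

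There is no genuine obstacle here: all the geometric input is in Comessatti's classification and the already-established Theorem \ref{main} and Corollary \ref{rational}. The only point that must be handled carefully is the behavior under blowup at a real point, which one should state explicitly to cover every item in the classification in a uniform way.
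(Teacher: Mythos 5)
Your proposal is correct and follows essentially the same route as the paper: invoke Comessatti's classification, observe that rational surfaces are simply connected so Theorem \ref{main} applies, and reduce to connectedness of $X(\RR)$, noting that real blowups change neither maximality nor the number of real components. The paper states the blowup invariance in one line, whereas you verify it by the Betti-number count $\beta_*(X)\mapsto\beta_*(X)+1$ and $\beta_*(X(\RR))\mapsto\beta_*(X(\RR))+1$; this is a correct and slightly more explicit rendering of the same argument.
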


\subsubsection{Product of curves}

According to Theorem  \ref{b1>0}, if $X$ is a maximal real surface such that  $b_0(X(\RR))>1+\beta_1,$ 
its Hilbert square is not maximal. Examples of such surfaces with  $\beta_1>0$ exist in abundance.  
The simplest ones  are the products $C_1\times C_2,$ where  $C_1$ and $C_2$ are maximal 
curves of genus $g_1$ and  $g_2,$ respectively, where  $g_1,g_2\geq 2$ and $g_1+g_2>4.$

\subsubsection{Enriques surfaces}

For surfaces $X$ with $\Tors_2 H^2(X;\ZZ)\neq 0,$ one can still compute the $\FF_2$-Betti numbers of the 
Hilbert square $X^{[2]}$ for a given example,  albeit a general formula is lacking \cite[Example 2.5]{square}.
We address here the maximality of the Hilbert square for real Enriques surfaces, and we find:

\begin{thm}
No real Enriques surfaces has maximal Hilbert square.
\end{thm}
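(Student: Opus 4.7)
The strategy imitates the proofs of Theorems \ref{main} and \ref{b1>0}, but must be adapted to the presence of $2$-torsion in $H^*(X;\ZZ)$. Let $X$ be a maximal real Enriques surface, so that $\beta_*(X)=14$, $\beta_1(X)=1$, $\beta_2(X)=11$. Set $r=\beta_0(X(\RR))$. By the classification of Degtyarev-Itenberg-Kharlamov \cite{enriques}, $X(\RR)$ is always disconnected (so $r\geq 2$) and its possible topological types form a known finite list; in particular maximality forces $\chi(X(\RR))=4r-14$.

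First I would pin down $\beta_*(X^{[2]};\FF_2)$ via \cite[Example 2.5]{square}, obtaining a specific value strictly larger than the torsion-free lower bound $\tfrac12\beta_*(\beta_*+1)+\beta_*-2\beta_1=117$ provided by Proposition \ref{totaro-calc}(3), the excess coming from the $2$-torsion in $H^2(X;\ZZ)$. In parallel, Proposition \ref{totaro-calc}(1) and the expression $\chi(X(\RR))=4r-14$ give
\[
\chi(X^{[2]}(\RR))=8r^2-60r+117.
\]
Since $X^{[2]}(\RR)$ is a closed $4$-manifold, Poincar\'e duality over $\FF_2$ yields $\beta_*(X^{[2]}(\RR))=\chi(X^{[2]}(\RR))+4\beta_1(X^{[2]}(\RR))$, so the problem reduces to pinning down $\beta_1(X^{[2]}(\RR))$ tightly enough.

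For this I would use the cut-and-paste decomposition of Section \ref{cut-paste}, splitting off the contribution of the extra components as in Lemma \ref{b1-extra}, and running the Mayer-Vietoris sequence
\[
\bigoplus_{i=1}^r H_1(\PP_\RR T^*F_i)\xrightarrow{\mu}\bigoplus_{i=0}^r H_1(\HH_i)\to H_1(X^{[2]}_{\text{main}}(\RR))\to 0
\]
exactly as in the proof of Theorem \ref{b1>0}. The inputs change: $\beta_1(\HH_0)=\beta_3(X/\Conj,X(\RR))$ must be computed from the Smith sequence using the fact that $\beta_1(X)=\beta_3(X)=1$ arises entirely from $2$-torsion in $H_*(X;\ZZ)$; the ranks $\beta_1(\HH_i)$ for $i\geq 1$ are read off from the topological types of the components $F_i$ listed in \cite{enriques} via the Kallel-Salvatore formula as in Lemma \ref{b1-main}; and the rank of $\mu$ is bounded using Proposition \ref{inclusion} together with the structure of $H_1(X/\Conj)$ provided by Smith theory. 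Combining these bounds with the case list of \cite{enriques} I would show that for every topological type of $X(\RR)$ compatible with maximality, the resulting value of $\beta_*(X^{[2]}(\RR))$ falls strictly below $\beta_*(X^{[2]})$.

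The main obstacle is twofold: (i) extracting the precise $2$-torsion correction to $\beta_*(X^{[2]})$ from \cite{square} — Proposition \ref{totaro-calc} now only gives an inequality, and one needs the exact count of extra $\FF_2$-classes coming from $\Tors_2 H^2(X;\ZZ)=\ZZ/2$; and (ii) the boundary case $r\leq \beta_1(X)+1=2$, which lies outside the direct reach of Theorem \ref{b1>0}, so that the Mayer-Vietoris rank computation must be carried out with attention to those homology classes of $\HH_0$ that are detected by $2$-torsion rather than by genuine $1$-cycles. The strict gap between the $2$-torsion excess on the complex side and the maximal real $\beta_1$ one can produce on the real side is exactly what produces the non-maximality, uniformly across the list.
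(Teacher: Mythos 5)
Your overall strategy is the same as the paper's (Totaro's computation of $\beta_*(X^{[2]})$ from \cite[Example 2.5]{square}, the cut-and-paste decomposition, Mayer--Vietoris for $X^{[2]}_{\text{main}}(\RR)$, and a bound on $\rank\mu$), but as written the proposal has both numerical errors and an unresolved central step. First, the $\FF_2$-Betti numbers of an Enriques surface are wrong: from $H_1(X;\ZZ)=\ZZ/2$ and $H^2(X;\ZZ)=\ZZ^{10}\oplus\ZZ/2$ the universal coefficient theorem gives $\beta_0=\beta_4=1$, $\beta_1=\beta_3=1$, $\beta_2=12$, hence $\beta_*(X)=16$ (your values $\beta_*=14$, $\beta_2=11$ are not even mutually consistent, since $1+1+11+1+1=15$). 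Consequently the torsion-free count $\tfrac12\beta_*(\beta_*+1)+\beta_*-2\beta_1$ equals $150$, not $117$, maximality of $X$ forces $\chi(X(\RR))=4r-16$ rather than $4r-14$, and your displayed formula for $\chi(X^{[2]}(\RR))$ is incorrect. Since the whole argument is a numerical comparison, these errors would derail it.

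Second, and more importantly, the decisive step is listed as a ``main obstacle'' rather than carried out. The point that makes the proof work is that Totaro's computation gives the exact value $\beta_*(X^{[2]})=154$, i.e.\ exactly $4$ more than the torsion-free count $150$. Feeding this into the same bookkeeping as in the proof of Theorem \ref{b1>0} (the identities $\beta_1(\HH_0)=1+\beta_1=2$ and $\sum_{i=1}^r\beta_1(\HH_i)=\beta_*-r$ still hold, since they only use $\FF_2$-maximality), one finds that maximality of $X^{[2]}$ would force $\rank\mu=1$, not $1+\beta_1=2$. This is then killed by the completely elementary bound $\rank\mu\ge\rank\bigl(\oplus_{i=1}^r\mathfrak{i}_*^{\HH_i}\bigr)=r\ge 2$, using Proposition \ref{inclusion}.1) and the disconnectedness of $X(\RR)$. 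In particular, no case-by-case analysis over the classification of \cite{enriques}, no refined study of $H_1(\HH_0)$ or of the boundary case $r=2$, and no substitute for Lemma \ref{rank-mu-notorsion} (whose proof anyway needs $\Tors_2 H_1(X;\ZZ)=0$, which fails here) is required: the $2$-torsion excess of $4$ on the complex side does all the work by itself. Until you pin down the value $154$ and draw this consequence, the proof is not complete.
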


\begin{proof} The proof follows the same lines as that of Theorem \ref{b1>0}. There are only $2$ essential 
changes. First, the formula (\ref{totaro-notorsion}) is to be replaced by 
$$
\beta_*(X^{[2]})=\frac12 \beta_*(\beta_*+1)+\beta_*+ 2\beta_1
$$
which follows, for example, from the computation of $\FF_2$-Betti numbers performed by B. Totaro \cite{square}.
Second, at the final step, Lemma \ref{rank-mu-notorsion} is to be replaced by a much simpler observation: 
$\rank\mu\ge \rank \oplus _1^r\mu_i =r$.
\end{proof}

\begin{rmk}
{\rm 
As mentioned in Introduction, the real locus of a  maximal real Enriques surface is always disconnected. 
This can be proved directly, i.e., without appealing to a full classification \cite{enriques},  in the following manner. 
Namely, let us assume that the real locus $X(\RR)$ of some maximal real Enriques surface $X$ is connected. 
Then, $X(\RR)$ is homeomorphic to a real projective plane blown-up at $16-3=13$ real points. Its universal 
covering is a double covering $Y\to X,$ where $Y$ is a K3 surface. $Y$ can be equipped with two possible lifts 
of the real structure. We choose the one for which $Y(\RR)\neq \emptyset.$ Then, $Y(\RR)$ is the orientation 
double cover of $X(\RR),$ and so $\chi(Y(\RR))=2\chi(X(\RR))= -24.$ Since $Y(\RR)$ is orientable, 
it follows that $Y(\RR)$ is homeomorphic to a sphere with 13 handles, which contradicts the Smith inequality 
$\beta_*(Y(\RR))\le \beta_*(Y)=24$.
}
\end{rmk}


\subsection{Application to cubic 4-folds}


The deformation classification of real nonsingular cubic 4-folds, established by S.~Finashin and 
V.~Kharlamov \cite{4fold}, associates with all but one class (called by them {\it irregular}) a deformation class 
of a real nonsingular K3 surface. On the other hand, due to V.~Krasnov \cite{krasnov}, for each regular class 
of cubic 4-folds (in Finashin-Kharlamov's sense) except the one which corresponds to a K3 with 10 spheres 
as the real locus, the Fano variety is equivariantly diffeomorphic to the Hilbert square of the corresponding K3. 
In a separate paper \cite{krasnov+}, Krasnov proved that for the cubics in the irregular class, the real locus 
of the Fano variety is a disjoint union of six disjoint copies of $S^2\times S^2$ and one more component which 
is homeomorphic to $X^{[2]}_{\text{main}}(\RR),$ where $X$ is a K3-surface with $X(\RR)=3S^2$. Therefore, 
combining these results with Corollary \ref{main-corollary} and the computations from the proof of Theorem 
\ref{main}, we get the following statement.

\begin{thm} 
\label{cubic4folds}
If a real nonsingular cubic 4-fold belongs to a regular class and the real locus of the associated real 
K3 is not the union of 10 spheres, as well as if the cubic belongs to the irregular class, then the Fano
variety of the cubic is not maximal.
\end{thm}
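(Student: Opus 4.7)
The plan is to use Krasnov's two topological descriptions of the Fano variety $F$ and split the argument into two cases. For a regular class distinct from the one whose associated K3 has real locus $10S^2$, Krasnov shows that $F$ is equivariantly diffeomorphic to the Hilbert square $X^{[2]}$ of the associated real K3 surface $X$. Since every K3 satisfies $H_1(X;\ZZ)=0$ and $h^{2,0}(X)=1>0$, Corollary \ref{main-corollary} gives that $X^{[2]}$ is not maximal, and hence neither is $F$.

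For the irregular class, I would compare $\beta_*(F(\RR))$ with $\beta_*(F)=324$, the latter being independent of the cubic by Beauville--Donagi's diffeomorphism of $F$ with the Hilbert square of any K3 (as computed via Proposition \ref{totaro-calc}(2)). Krasnov's description gives $F(\RR)=6(S^2\times S^2)\sqcup Y$ with $Y\cong X^{[2]}_{\text{main}}(\RR)$ for a non-maximal K3 surface $X$ having $X(\RR)=3S^2$. Since Proposition \ref{totaro-calc}(1) does not use maximality of $X$, it gives $\chi(X^{[2]}(\RR))=24$; subtracting the contribution $\chi(X^{[2]}_{\text{extra}}(\RR))=12$ of the three $(S^2\times S^2)$ pieces leaves $\chi(Y)=12$, so $\chi(F(\RR))=36$. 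To control $\beta_1(Y)$ I would use the cut-and-paste decomposition $Y=\HH_0\cup\HH_1\cup\HH_2\cup\HH_3$ from Section \ref{cut-paste}. For $i\geq 1$, $\interior\HH_i\cong\CC\PP^2\setminus\{\text{smooth conic}\}$, and a short Lefschetz-duality computation yields $\beta_1(\HH_i)=1$. For $\HH_0$, the Smith sequence argument of Lemma \ref{b1-main} relies only on $H_1(X)=H_3(X)=0$, which holds for every K3, and gives $\beta_1(\HH_0)=1$. Proposition \ref{inclusion} then identifies the Mayer--Vietoris map $\mu$ as a $3\times 4$ matrix over $\FF_2$ of rank $3$, so $\beta_1(Y)=4-3=1$. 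With $\beta_0(F(\RR))=7$ and $\chi(F(\RR))=36$, the $\FF_2$-Poincar\'e duality on the closed $4$-manifold $F(\RR)$ forces
\[
\beta_*(F(\RR))=\chi(F(\RR))+4\beta_1(F(\RR))=36+4=40,
\]
vastly smaller than $324$, so $F$ is not maximal.

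The main obstacle is that the K3 surface $X$ arising in the irregular case is not maximal, so the closed-form expressions at the end of the proof of Theorem \ref{main} do not apply verbatim. One has to inspect the proofs of Lemma \ref{b1-main} and Proposition \ref{inclusion} step by step and confirm that each ingredient depends only on the vanishing $H_1(X)=H_3(X)=0$, rather than on full Smith--Thom maximality; once this is verified, the Mayer--Vietoris count of $\beta_1(Y)$ is available and the numerical bound on $\beta_*(F(\RR))$ follows immediately.
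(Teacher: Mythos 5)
Your proposal is correct and follows essentially the same route as the paper: the regular case is dispatched by Krasnov's equivariant identification of the Fano variety with $X^{[2]}$ together with Corollary \ref{main-corollary}, and the irregular case by redoing the Euler-characteristic and Mayer--Vietoris computations of Section \ref{cut-paste} and Lemma \ref{b1-main} for the component $X^{[2]}_{\text{main}}(\RR)$ with $X(\RR)=3S^2$. Your explicit check that those computations (in particular $\beta_1(\HH_0)=1$, $\beta_1(\HH_i)=1$, and $\rank\mu=3$) rely only on $H_1(X)=H_3(X)=0$ and not on maximality of the underlying K3 is precisely the point the paper's one-line proof leaves implicit, and your resulting count $\beta_*(F(\RR))=40<324$ is consistent with it.
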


\begin{rmk} 
{\rm 
Notice that maximal real nonsingular cubic 4-folds do exist. They form 3 real deformation classes, see \cite{4fold}, 
distinguished by the topology of their real loci: 
$\PP^4(\RR)\# 10(S^2\times S^2)\# (S^1 \times S^3)$, or $\PP^4(\RR)\# 6(S^2\times S^2)\# 5(S^1 \times S^3)$, 
or $\PP^4(\RR)\# 2(S^2\times S^2)\#9 (S^1 \times S^3)$, see \cite{top4fold}. 
As a consequence of Theorem \ref{cubic4folds}, the Fano variety of a maximal real nonsingular
cubic $4$-fold is not maximal. This result contrasts with Krasnov's theorem \cite{krasnov++} stating that the 
Fano surface of a real nonsingular cubic $3$-fold is maximal if and only if the cubic is maximal.
}
\end{rmk}


\subsection{On third Hilbert power}


As was noticed by L.~Fu \cite{fu}, maximality of $X^{[2]}$ implies that of $X^{[3]}$. Combining this 
with Theorem \ref{main} and Propositions \ref{elliptic}, \ref{ruled}, each time when the corresponding 
statement insures maximality of $X^{[2]}$ we deduce maximality of $X^{[3]}$. But it leaves open the 
non-maximality results: for example, is $X^{[3]}$ non-maximal for K3-surfaces, and, more generally, 
for surfaces with $H_1(X)=0$ and disconnected real part?


\bibliographystyle{alpha}

\end{document}